\newcommand{\R}		{\mathbb{R}}
\newcommand{\CP}		{\mathbb{C}}
\newcommand{\N}		{\mathbb{N}}
\newcommand{\cws}{\stackrel{*}{\to}}
\newcommand{\diag}{\mathsf{diag}}
\newcommand{\supp}{\mathsf{supp}}
\newcommand{\re}{\mathsf{Re}}
\renewcommand{\det}{\mathsf{det}}
\renewcommand{\deg}{\mathsf{deg}}
\newcommand{\RS}{\boldsymbol{\mathfrak{R}}}
\newcommand{\z}	{{\boldsymbol z}}
\newcommand{\x}	{{\boldsymbol x}}
\newcommand{\rhy}   {\textnormal{RHP}-${\boldsymbol Y}$}
\newcommand{\rhx}   {\textnormal{RHP}-${\boldsymbol X}$}
\newcommand{\rhz}   {\textnormal{RHP}-${\boldsymbol Z}$}
\newcommand{\rhn}   {\textnormal{RHP}-${\boldsymbol N}$}
\newtheorem{theorem}{Theorem}
\newtheorem{proposition}[theorem]{Proposition}
\newtheorem{corollary}[theorem]{Corollary}
\newtheorem{lemma}[theorem]{Lemma}
\newtheorem*{definition}{Definition}
\theoremstyle{remark}
\newtheorem*{remark}{Remark}
\begin{document}

\title[Szeg\H{o}-Type Asymptotics of Frobenius-Pad\'e approximants]{Szeg\H{o}-type asymptotics  for ray sequences of Frobenius-Pad\'e approximants}

\author[A.I. Aptekarev]{Alexander I. Aptekarev}

\address{Keldysh Institute of Applied Mathematics, Russian Academy of Science, Moscow, 125047, Russia}

\email{\href{mailto:aptekaa@keldysh.ru}{aptekaa@keldysh.ru}}

\author[A.I. Bogolubsky]{Alexey I. Bogolubsky}

\address{Pirogov Russian National Research Medical University, Ostrovitianov str.~1, Moscow, 117997, Russia}

\email{\href{mailto:bogolub@gmail.com}{bogolub@gmail.com}}

\author[M. Yattselev]{Maxim L. Yattselev}

\address{Department of Mathematical Sciences, Indiana University-Purdue University Indianapolis, 402~North Blackford Street, Indianapolis, IN 46202}

\email{\href{mailto:maxyatts@iupui.edu}{maxyatts@iupui.edu}}

\thanks{The research of the first author (A.I. Aptekarev) was supported by a grant of the Russian Science Foundation, RScF-14-21-00025. The research of the second author (A.I. Bogolubsky) was supported by  grants of the scientific school NSh--8033.2010.1, and RFBR--14-01-00604. The research of the third author (M.L. Yattselev) was supported by a grant from the Simons Foundation, \#354538.}

\subjclass[2000]{}

\keywords{Frobenius-Pad\'e approximants, linear Pad\'e-Chebysh\"ev approximants, Pad\'e approximants of orthogonal expansions, orthogonal polynomials, Markov-type functions, Riemann-Hilbert problems.}

\maketitle

\begin{abstract}

Let $\widehat\sigma$ be a Cauchy transform of a possibly complex-valued Borel measure $\sigma$ and $\{p_n\}$ be a system of orthonormal polynomials with respect to a measure $\mu$, $\supp(\mu)\cap\supp(\sigma)=\varnothing$. An $(m,n)$-th Frobenius-Pad\'e approximant to $\widehat\sigma$ is a rational function $P/Q$, $\deg(P)\leq m$, $\deg(Q)\leq n$, such that the first $m+n+1$ Fourier coefficients of the linear form $Q\widehat\sigma-P$ vanish when the form is developed into a series with respect to the polynomials $p_n$. We investigate the convergence of the Frobenius-Pad\'e approximants to $\widehat\sigma$ along ray sequences $\frac n{n+m+1}\to c>0$, $n-1\leq m$, when $\mu$ and $\sigma$ are supported on intervals on the real line and their Radon-Nikodym derivatives with respect to the arcsine distribution of the respective interval are holomorphic functions.

\end{abstract}

\maketitle

\section{Introduction}
\label{sec:intro}

Representation of  functions by means of \textit{series  with respect to the Chebyshev polynomials} is a very convenient tool in numerical analysis (see, for example, \cite{ChebFun14}). Such a series converges in the interior of the largest ellipse into which the function has holomorphic continuation. However, if we need to compute the function beyond the boundary of the maximal ellipse of convergence of the series with respect to the Chebyshev polynomials (or any other orthonormal polynomial sequences) then one has to employ rational rather then polynomial approximation of the orthogonal polynomial expansion (see \cite{Clenshaw74}, \cite{Gragg77}). The construction of these rational approximants is related to the notion of the generalized \textit{Pad\'e  table}  \cite{Baker81}. We call them \textit{Pad\'e approximants of an orthogonal expansion}. In this paper we focus on the \textit{Frobenius-Pad\'e approximants}, which are defined by means of a linear system with constant coefficients which are precisely the coefficients of the polynomial expansion of the approximated function (see \eqref{Frobenius}, below). This type of approximants is the most popular in practice due to the ease of their numerical computation.

Let $\mu$ be a possibly complex-valued Borel measure supported on an interval $\Delta_\mu\subset\R$. Assume that $\mu$ possesses full \emph{orthonormal} system of polynomials that we denote by $\{p_n\}$, i.e.,
\[
\int p_n(x)p_m(x)\mathrm d\mu(x) = \delta_{nm},
\]
where $\delta_{nm}$ is the usual Kronecker symbol. When $\mu$ is a positive measure such a system always exists. For complex measures \emph{orthogonal polynomials of minimal degree} uniquely exist as well, but it might happen that $\deg(p_n)< n$, in which case $p_n$ is orthogonal to itself and therefore cannot be orthonormalized. Given a function $f\in L^1(\mu)$, we can associate to $f$ a series
\begin{equation}
\label{expansion}
\sum_{i=0}^\infty c_i(f;\mu)p_i(x), \quad c_i(f;\mu) := \int f(x)p_i(x)\mathrm d\mu(x).
\end{equation}

\begin{definition}
A Frobenius-Pad\'e approximant of type $(m,n)\in\N^2$ to $f\in L^1(\mu)$ is a rational function $P_{m,n}/Q_{m,n}$, $\deg(P_{m,n})\leq m$, $\deg(Q_{m,n})\leq n$, such that
\begin{equation}
\label{Frobenius}
c_i(Q_{m,n}f - P_{m,n};\mu) = 0, \quad i\in\{0,\ldots,m+n\}.
\end{equation}
\end{definition}

Frobenius-Pad\'e approximants always exist as finding $Q_{m,n}$ amounts to solving a linear system
\[
\left(\begin{matrix}
c_{m+1}(p_0f;\mu) & \cdots & c_{m+1}(p_nf;\mu)  \\
\vdots & \ddots & \vdots \\
c_{m+n}(p_0f;\mu) & \cdots & c_{m+n}(p_nf;\mu)
\end{matrix}\right)
\left(\begin{matrix}
a_0 \\ \vdots \\ a_n
\end{matrix}\right) =
\left(\begin{matrix}
0 \\ \vdots \\ 0
\end{matrix}\right)
\]
and letting $Q_{m,n}(x)=\sum_{j=0}^n a_jp_j(x)$ (the system has $n$ equations and $n+1$ unknowns), while $P_{m,n}(x)=\sum_{j=0}^m b_jp_j(x)$ uniquely depends on $Q_{m,n}$ via
\[
\left(\begin{matrix}
c_0(p_0f;\mu) & \cdots & c_0(p_nf;\mu)  \\
\vdots & \ddots & \vdots \\
c_m(p_0f;\mu) & \cdots & c_m(p_nf;\mu)
\end{matrix}\right)
\left(\begin{matrix}
a_0 \\ \vdots \\ a_n
\end{matrix}\right) =
\left(\begin{matrix}
b_0 \\ \vdots \\ b_m
\end{matrix}\right).
\]
An approximant may not be unique, however, the one corresponding to $Q_{m,n}$ of the smallest degree is. Hence, if $\deg(Q_{m,n})=n$ for all solutions, the approximant is unique.

The main motivation for using  Pad\'e approximants of orthogonal expansions is due to their convergence in wider domains than the convergence domains of orthogonal expansions themselves. The problems of convergence of the rows of corresponding tables of the Pad\'e approximants of orthogonal expansions have been investigated by S.P.~Suetin \cite{SU78}, \cite{SU79}. The weak asymptotics and the convergence of the \textit{diagonal} (i.e. type $(n-1,n)$) Pad\'e approximants of orthogonal expansions  for Cauchy transforms
\begin{equation}
\label{markov}
\widehat \sigma(z) := \int\frac{\mathrm d\sigma(t)}{t-z},\qquad \quad\sigma ' (t)\,>\,0,\quad t\in \Delta_\sigma\subset\R, \quad \Delta_\mu\cap\Delta_\sigma=\varnothing,
\end{equation}
have been obtained A.A.~Gonchar, E.A.~Rakhmanov and S.P.~Suetin in \cite{GRakhSu91}, \cite{GRakhSu92}.

In this paper we investigate the \textit{strong asymptotics} and convergence properties of the \textit{ray sequences }(i.e. type $(m,n)$ : $n-1\leq m$ and $n/(n+m)\to c>0$) of Frobenius-Pad\'e approximants for Cauchy transforms \eqref{markov} where $\sigma$ is, generally speaking, a complex-valued Borel measure. To motivate the forthcoming definitions, let us (following \cite{GRakhSu91}, \cite{GRakhSu92}) first heuristically describe the asymptotic behavior of the approximants using the formalism of orthogonal polynomials and potential theory.

For the moment, assume that the measures of $\mu$ and $\sigma$ are positive. In this case the linear form
\[
R_{m,n} := Q_{m,n}\widehat\sigma - P_{m,n}
\]
is real-valued on $\Delta_\mu$ and is orthogonal to all polynomials of degree at most $m+n$ with respect to $\mu$ by \eqref{Frobenius}. Therefore it must have at least $m+n+1$ zeros there. Denote by $V_{m,n}$ the monic polynomial whose zeros are the zeros of $R_{m,n}$ on $\Delta_\mu$, $\deg(V_{m,n})\geq m+n+1$. The expression $z^k R_{m,n}(z)/V_{m,n}(z)$, $k\leq\min\{n-1,m\}$, is holomorphic off $\Delta_\sigma$ and is vanishing at infinity with order at least $2$. Then it follows from Cauchy's theorem, Cauchy's integral formula, and \eqref{markov} that
\begin{equation}
\label{ortho}
\int \frac{x^kQ_{m,n}(x)}{V_{m,n}(x)}\mathrm d\sigma(x) = 0, \quad k\leq \min\{n-1,m\}.
\end{equation}
When $n-1\leq m$, the number of orthogonality conditions above is equal to $n$ and therefore $Q_{m,n}$ must have degree $n$ since $\mathrm d\sigma(x)/V_{m,n}(x)$ is a real measure of constant sign on $\Delta_\sigma$. In particular, this implies uniqueness of $Q_{m,n}$ up to a multiplicative factor. On the other hand, similarly to \eqref{ortho}, Cauchy integral formula, \eqref{markov}, and orthogonality of $R_{m,n}$ with respect to $\mu$ yield that
\begin{equation}
\label{ortho-V}
\int \frac{x^kV_{m,n}(x)}{Q_{m,n}(x)} \left( \int \frac{Q_{m,n}^2(t)}{V_{m,n}(t)}\frac{\mathrm d\sigma(t)}{t-x} \right)\mathrm d\mu(x)=0, \quad k\in\{0,\ldots,m+n\}.
\end{equation}
Given mutual orthogonality relations \eqref{ortho} and \eqref{ortho-V}, it is well understood \cite{Nik86,GRakhS97,ApLy10} which measures describe the limiting behavior of the zeros of $Q_{m,n}$ and $V_{m,n}$. Assuming that $n-1\leq m$ and $n/(n+m)\to c>0$, let $\tau_{\mu,c}$, $|\tau_{\mu,c}|=1$, and $\tau_{\sigma,c}$, $|\tau_{\sigma,c}|=c$, be weak$^*$ limit points of the counting measures of the zeros of $V_{m,n}$ and $Q_{m,n}$, respectively, normalized by $n+m$. Then the pair $(\tau_{\mu,c},\tau_{\sigma,c})$ can be uniquely identified as follows \cite{GRakhS97,ApLy10}.

\begin{proposition}
\label{prop:nikishin}
Given $c\in(0,1/2]$, denote by $\mathcal M_c$ the following class of pairs of Borel measures:
\[
\mathcal M_c := \big\{(\tau_\mu,\tau_\sigma):\;\supp(\tau_\nu)\subseteq\Delta_\nu,\;\nu\in\{\mu,\sigma\},\;|\tau_\mu|=1,\;|\tau_\sigma|=c\big\}.
\]
There exists a pair $(\tau_{\mu,c},\tau_{\sigma,c})\in\mathcal M_c$ such that\footnote{In what follows, $V^\nu(z)=-\int\log|z-w|\mathrm d\nu(w)$ is the logarithmic potential of the measure $\nu$.}
\begin{equation}
\label{nikishin}
\left\{
\begin{array}{lll}
2V^{\tau_{\sigma,c}} - V^{\tau_{\mu,c}} = \min_{\Delta_\sigma} (2V^{\tau_{\sigma,c}} - V^{\tau_{\mu,c}})= 3\ell_{\sigma,c} & \text{on} & \supp(\tau_{\sigma,c}), \medskip \\
2V^{\tau_{\mu,c}} - V^{\tau_{\sigma,c}} = \min_{\Delta_\mu} (2V^{\tau_{\mu,c}} - V^{\tau_{\sigma,c}})= 3\ell_{\mu,c} & \text{on} & \supp(\tau_{\mu,c}),
\end{array}
\right.
\end{equation}
for some constants $\ell_{\mu,c}$ and $\ell_{\sigma,c}$. Moreover, if for some pair $(\tau_\mu,\tau_\sigma)\in\mathcal M_c$ relations analogous to \eqref{nikishin} are satisfied, then $(\tau_\mu,\tau_\sigma)=(\tau_{\mu,c},\tau_{\sigma,c})$.  In addition, it holds that
\[
\supp(\tau_{\mu,c})=\Delta_\mu \quad \text{and} \quad \supp(\tau_{\sigma,c})=:\Delta_{\sigma,c} \quad \text{is an interval.}
\]
Furthermore, set
\begin{equation}
\label{domains}
\left\{
\begin{array}{lll}
D_{\sigma,c}^+ & := & \big\{z:\;V^{\tau_{\mu,c}}(z) - 2V^{\tau_{\sigma,c}}(z) + 3\ell_{\sigma,c}>0\big\}, \medskip \\
D_{\sigma,c}^ - & := & \big\{z:\;V^{\tau_{\mu,c}}(z) - 2V^{\tau_{\sigma,c}}(z) + 3\ell_{\sigma,c}<0\big\}.
\end{array}
\right.
\end{equation}
Then $D_{\sigma,c}^+\neq\varnothing$ and $\Delta_{\sigma,c}\subseteq\partial D_{\sigma,c}^+$, $D_{\sigma,c_2}^-\subseteq D_{\sigma,c_1}^-$ when $c_1\leq c_2$, $D_{\sigma,c}^-=\varnothing$ when $c=\frac12$ and $\infty\in D_{\sigma,c}^-\neq\varnothing$ otherwise, and $(\Delta_\sigma\setminus\Delta_{\sigma,c})\subset D_{\sigma,c}^-$, see Figure~\ref{fig:domains}.
\end{proposition}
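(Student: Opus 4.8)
The plan is to cast the system \eqref{nikishin} as the equilibrium problem for a vector potential with an interaction (Nikishin) matrix and to invoke the existence/uniqueness theory for such problems. Concretely, consider the energy functional
\[
J(\tau_\mu,\tau_\sigma) = \iint\log\frac{1}{|x-y|}\mathrm d\tau_\mu(x)\mathrm d\tau_\mu(y) + \iint\log\frac{1}{|x-y|}\mathrm d\tau_\sigma(x)\mathrm d\tau_\sigma(y) - \iint\log\frac{1}{|x-y|}\mathrm d\tau_\mu(x)\mathrm d\tau_\sigma(y)
\]
over the class $\mathcal M_c$; the associated interaction matrix $\left(\begin{smallmatrix}1&-1/2\\-1/2&1\end{smallmatrix}\right)$ is positive definite, so $J$ is strictly convex on the (weakly compact, convex) set $\mathcal M_c$ once one restricts to measures with, say, uniformly bounded logarithmic energy — and a standard argument (using $\Delta_\mu\cap\Delta_\sigma=\varnothing$ to control the mixed term and upper-semicontinuity of the energy) shows the infimum is attained at a unique pair. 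The variational (Euler–Lagrange) conditions for this minimizer are exactly \eqref{nikishin}: varying $\tau_{\sigma,c}$ gives that $2V^{\tau_{\sigma,c}} - V^{\tau_{\mu,c}}$ equals a constant on $\supp(\tau_{\sigma,c})$ and is $\geq$ that constant on $\Delta_\sigma$, and symmetrically for $\tau_{\mu,c}$; the identification of the constants as $3\ell_{\sigma,c}$, $3\ell_{\mu,c}$ is a normalization. This simultaneously yields existence, uniqueness within $\mathcal M_c$, and the "min" characterization. References \cite{GRakhS97,ApLy10} contain precisely this machinery, so I would mostly cite them and only sketch the specialization.

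For the qualitative statements about supports I would argue as follows. First, $\supp(\tau_{\mu,c})=\Delta_\mu$: on the gap structure, if a subinterval of $\Delta_\mu$ carried no mass, the function $2V^{\tau_{\mu,c}}-V^{\tau_{\sigma,c}}$ would be harmonic there and, being equal to its minimum value at the two endpoints of the component of $\supp(\tau_{\mu,c})$ bordering that subinterval while strictly exceeding it just inside, would violate the maximum principle (the total mass $|\tau_\mu|=1$ forces the balayage-type inequality in the right direction). That $\Delta_{\sigma,c}:=\supp(\tau_{\sigma,c})$ is an interval follows from the same convexity/maximum-principle reasoning combined with the fact that $\sigma'$ has constant sign — the function $2V^{\tau_{\sigma,c}}-V^{\tau_{\mu,c}}-3\ell_{\sigma,c}$ is superharmonic off $\supp(\tau_{\sigma,c})$, vanishes on that support, and its sign on the two sides of a hypothetical gap would again contradict the minimum principle. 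The inclusion $(\Delta_\sigma\setminus\Delta_{\sigma,c})\subset D_{\sigma,c}^-$ is immediate from the definition \eqref{domains} together with the strict inequality in the equilibrium conditions off the support.

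The behavior of $D_{\sigma,c}^\pm$ in $c$ I would extract from a monotonicity/continuity analysis of the family $\{(\tau_{\mu,c},\tau_{\sigma,c})\}_c$. Writing $u_c := V^{\tau_{\mu,c}} - 2V^{\tau_{\sigma,c}} + 3\ell_{\sigma,c}$, one has $D_{\sigma,c}^\pm = \{u_c\gtrless 0\}$. Near infinity $u_c(z) = -( 1 - 2c)\log|z| + O(1)$, so for $c<\tfrac12$ the coefficient $-(1-2c)<0$ forces $u_c\to -\infty$, hence $\infty\in D_{\sigma,c}^-\neq\varnothing$, while for $c=\tfrac12$ the leading term drops out; a finer argument (e.g. the minimality of $u_{1/2}$ on $\Delta_\sigma$ propagating by the minimum principle, since $u_{1/2}$ is then superharmonic in $\overline{\CP}\setminus\Delta_{\sigma,1/2}$ and bounded) shows $u_{1/2}\geq 0$ everywhere, i.e. $D_{\sigma,1/2}^-=\varnothing$. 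The nesting $D_{\sigma,c_2}^-\subseteq D_{\sigma,c_1}^-$ for $c_1\leq c_2$ would come from showing $\partial u_c/\partial c \leq 0$ (pointwise), which in turn follows by differentiating the equilibrium relations in $c$ and applying the maximum principle to the (signed) derivative measures $\partial\tau_{\sigma,c}/\partial c$, $\partial\tau_{\mu,c}/\partial c$; alternatively one derives it directly from a comparison of the two equilibrium problems for $c_1$ and $c_2$. Finally $D_{\sigma,c}^+\neq\varnothing$ and $\Delta_{\sigma,c}\subseteq\partial D_{\sigma,c}^+$ follow because $u_c$ is positive and harmonic in a punctured neighborhood of each point of $\Delta_{\sigma,c}$ on at least one side (indeed $u_c>0$ throughout a deleted neighborhood of $\Delta_{\sigma,c}$ in $\CP\setminus\Delta_\sigma$ by superharmonicity and the boundary value $0$), and $\Delta_{\sigma,c}$ lies on its boundary by construction. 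The main obstacle I anticipate is the monotonicity-in-$c$ claim $D_{\sigma,c_2}^-\subseteq D_{\sigma,c_1}^-$: making the differentiation of the equilibrium problem in the parameter $c$ rigorous (justifying differentiability of $c\mapsto\tau_{\sigma,c}$ and controlling the signed derivative measures near the endpoints of $\Delta_{\sigma,c}$, which themselves move with $c$) is delicate, and the cleanest route is probably to avoid differentiation entirely and instead compare the two vector equilibria directly via a subtraction-and-maximum-principle argument, treating $(1-2c)\log|z|$ as an external field.
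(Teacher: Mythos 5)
The paper contains no proof of this proposition: it is imported from the cited literature \cite{GRakhS97,ApLy10}, and the only supporting remark in the text (Section~3.1) is that $(\tau_{\mu,c},\tau_{\sigma,c})$ is characterized as the unique minimizer in $\mathcal M_c$ of exactly the energy functional $J$ you write down. Your vector-equilibrium route is therefore the same approach the paper relies on through its references, and your sketch is essentially correct; the one step you explicitly leave open --- the monotonicity $D_{\sigma,c_2}^-\subseteq D_{\sigma,c_1}^-$ in $c$ --- is likewise not argued anywhere in the paper, so you are not missing a proof the authors actually supply.
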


The domains $D_{\sigma,c}^+$ and $D_{\sigma,c}^-$ are significant for our analysis as we shall prove that the approximants do converge to $\widehat\sigma$ in $D_{\sigma,c}^+$ and diverge to infinity in $D_{\sigma,c}^-$.

\begin{figure}[ht!]
\centering
\includegraphics[scale=.5]{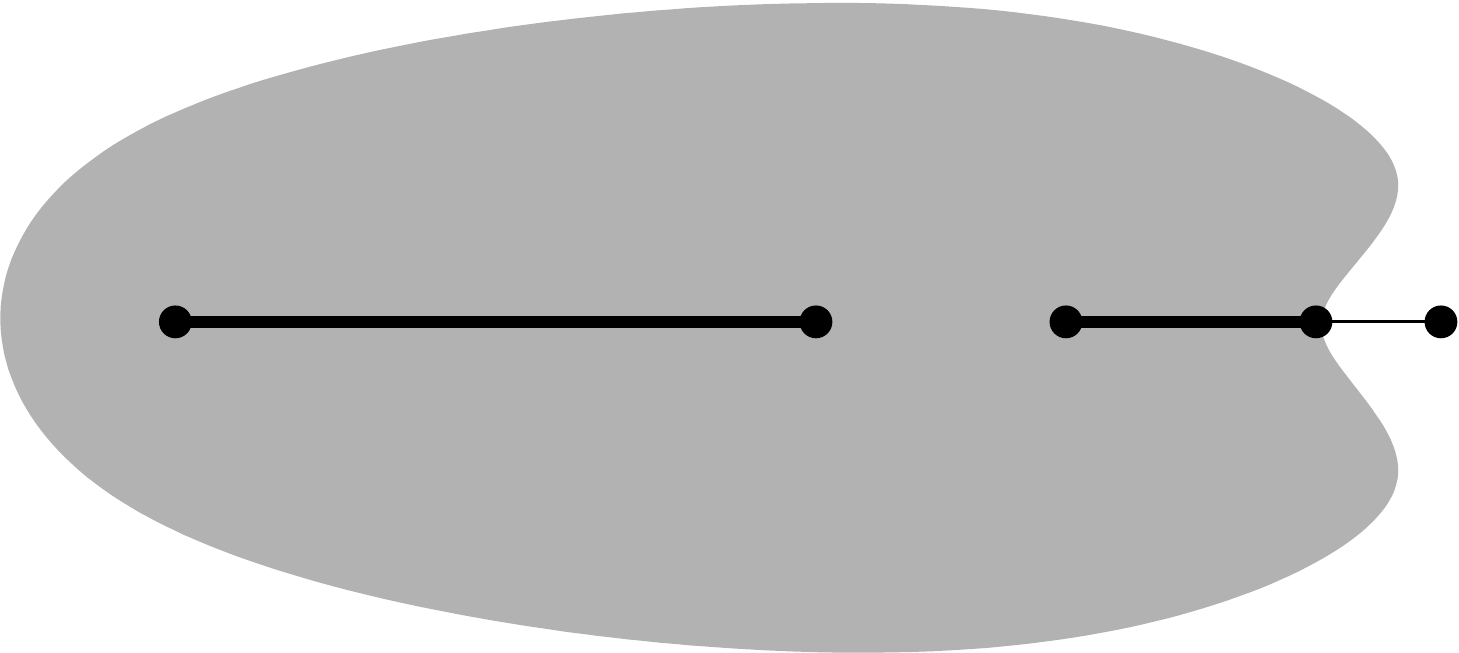}
\begin{picture}(0,0)
\put(-100,70){$D_{\sigma,c}^+$}
\put(-140,53){$\Delta_\mu$}
\put(-50,53){$\Delta_{\sigma,c}$}
\put(-192,38){$b_\mu$}
\put(-100,38){$a_\mu$}
\put(-80,38){$a_\sigma=a_{\sigma,c}$}
\put(-30,38){$b_{\sigma,c}$}
\put(-10,38){$b_\sigma$}
\end{picture}
\caption{\small Intervals $\Delta_\mu=[b_\mu,a_\mu]$, $\Delta_\sigma=[a_\sigma,b_\sigma]$, and $\Delta_{\sigma,c}=[a_{\sigma,c},b_{\sigma,c}]$ and the domain $D_{\sigma,c}^+$ (shaded region).}
\label{fig:domains}
\end{figure}

As shown in \cite{Grakh84}, one can describe the weak asymptotics of the polynomials $Q_{m,n}$ and $V_{m,n}$ using the logarithmic potentials of the measures $\tau_{\sigma,c}$ and $\tau_{\mu,c}$. As we aim at strong (Szeg\H{o}) asymptotics we shall omit such a description, which was addressed in \cite{GRakhSu92} for the diagonal case $m=n-1$. Let us point out that relations \eqref{nikishin} are stated differently in \cite{GRakhSu92}. There, see \cite[Equation (2.1)]{GRakhSu92}, it shown that there exists a unique probability measure $\lambda$, $\supp(\lambda)=\Delta_\mu$, and a constant $w$ such that
\[
G^\lambda-3V^\lambda = w \quad \text{on} \quad \Delta_\mu,
\]
where $G^\lambda$ is the Green potential of $\lambda$ relative to $\overline{\mathbb{C}}\setminus\Delta_\sigma$. To rewrite the above relation as system \eqref{nikishin}, recall that $G^\lambda=0$ on $\Delta_\sigma$ and that $G^\lambda=V^{\lambda-\hat\lambda}-\hat w$ in $\mathbb C$ where $\hat w$ is some constant and $\hat\lambda$ is the \emph{balayage} of $\lambda$ onto $\Delta_\sigma$, see \cite[Theorem~II.5.1]{SaffTotik}. Therefore,
\[
\left\{
\begin{array}{lll}
2V^{\hat\lambda/2} - V^{\lambda} = \hat w & \text{on} & \Delta_\sigma, \medskip \\
2V^{\lambda} - V^{\hat\lambda/2} = (w+\hat w)/2 & \text{on} & \Delta_\mu.
\end{array}
\right.
\]
The last equations clearly show that $\tau_{\mu,1/2}=\lambda$ and $\tau_{\sigma,1/2}=\hat\lambda/2$.

\section{Main Results}
\label{sec:main}

After the work of J. Nuttall  \cite{Nut84}, it is well understood that in order to identify strong limits of orthogonal polynomials one needs to replace the potential-theoretic extremal problem with a boundary value problem on a certain Riemann surface. To this end, let $c\in(0,1/2]$ and $\Delta_{\sigma,c}$ be as in Proposition~\ref{prop:nikishin}. We define the Riemann surface corresponding to $c$, say $\RS_c$, through its realization in the following way. Take $3$ copies of $\overline\CP$. Cut one of them along the interval $\Delta_{\sigma,c}$, which henceforth is denoted by $\RS_c^{(0)}$, cut the second one, $\RS_c^{(1)}$, along $\Delta_\mu\cup\Delta_{\sigma,c}$, and the last one, $\RS_c^{(2)}$, along $\Delta_\mu$. To finish the construction, glue the banks of the corresponding cuts crosswise, see Figure~\ref{fig:surface}.

\begin{figure}[ht!]
\centering
\includegraphics[scale=.5]{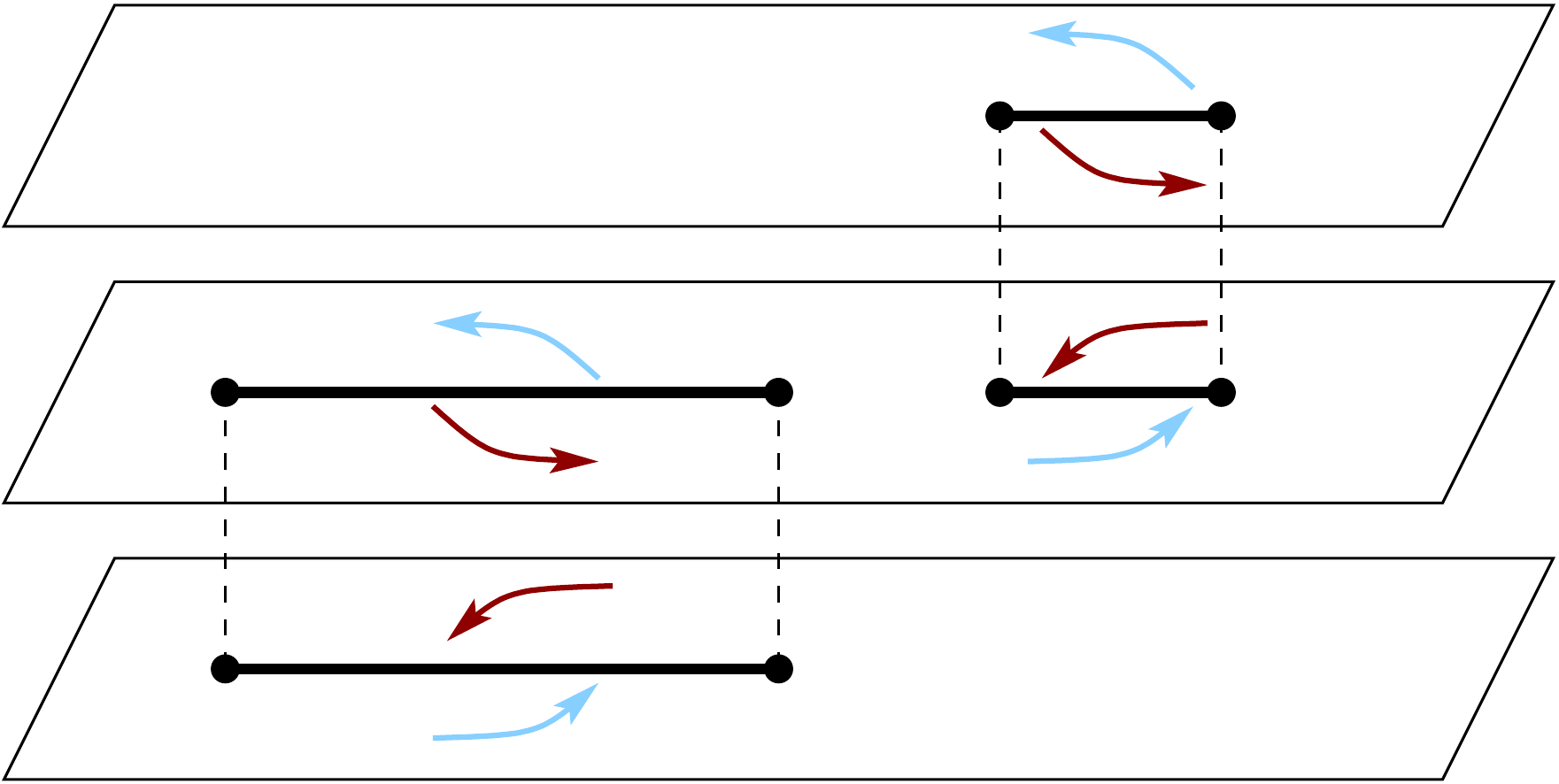}
\begin{picture}(0,0)
\put(-35,113){$\RS_c^{(0)}$}
\put(-35,68){$\RS_c^{(1)}$}
\put(-35,23){$\RS_c^{(2)}$}
\put(-235,13){$\boldsymbol b_\mu$}
\put(-127,13){$\boldsymbol a_\mu$}
\put(-235,58){$\boldsymbol b_\mu$}
\put(-127,58){$\boldsymbol a_\mu$}
\put(-108,58){$\boldsymbol a_\sigma$}
\put(-55,58){$\boldsymbol b_{\sigma,c}$}
\put(-108,105){$\boldsymbol a_\sigma$}
\put(-55,105){$\boldsymbol b_{\sigma,c}$}
\end{picture}
\caption{\small Riemann surface $\RS_c$ and its branch points $\boldsymbol a_\mu,\boldsymbol b_\mu,\boldsymbol a_{\sigma,c},\boldsymbol b_{\sigma,c}$.}
\label{fig:surface}
\end{figure}

We denote by $\pi$ the natural projection from $\RS_c$ to $\overline\CP$. We shall employ the notation $\z$ for a generic point of $\RS_c$ and use the convention $\pi(\z)=z$. If we want to specify the sheet of the surface, we write $z^{(i)}$ for a point on $\RS_c^{(i)}$ with $\pi(z^{(i)})=z$. This notation is well defined everywhere outside of the cycles $\boldsymbol\Delta_\mu:=\pi^{-1}(\Delta_\mu)$ and $\boldsymbol\Delta_{\sigma,c}:=\pi^{-1}(\Delta_{\sigma,c})$. Given a function $F(\z)$ defined on a subset of $\RS_c$, we set $F^{(i)}(z):=F\big(z^{(i)}\big)$ to be the pull-back from the $i$-th sheet.

Among all such surfaces, the ones with $c=\frac n{n+m}$ are especially important to us. We shall denote them by $\RS_{m,n}$. Observe that any $\RS_c$ has genus $0$. Thus, one can arbitrarily prescribe zero/pole multisets of rational functions on them as long as the multisets have the same cardinality. In what follows, we denote by $\Phi_{m,n}$ the rational function on $\RS_{m,n}$ with the divisor\footnote{The divisor is a formal expression that describes all the zeros (preceded by positive integer indicated multiplicity) and poles (preceded by negative integer also indicating multiplicity) of the function.} $(n+m)\infty^{(2)}-n\infty^{(0)}-m\infty^{(1)}$ and the normalization
\begin{equation}
\label{normalization}
\Phi_{m,n}^{(0)}(z)\Phi_{m,n}^{(1)}(z)\Phi_{m,n}^{(2)}(z) \equiv 1.
\end{equation}
Such a normalization is indeed possible since the function $\log\prod_{k=0}^2|\Phi_{m,n}^{(k)}|$ extends to a harmonic function on $\CP$ which has a well defined limit at infinity. Hence, it is a constant. Therefore, if \eqref{normalization} holds at one point, it holds throughout $\overline\CP$. It is a simple argument using Schwarz reflection principle, equilibrium relations \eqref{nikishin}, and the fact that only bounded harmonic function on $\RS_c$ are constants to show that
\begin{equation}
\label{logPhi}
\frac1{n+m}\log|\Phi_{m,n}(\z)| =
\left\{
\begin{array}{ll}
V^{-\tau_{\sigma,c}}(z) + \ell_{\mu,c} + 2\ell_{\sigma,c}, & \z\in\RS_{m,n}^{(0)}, \medskip \\
V^{\tau_{\sigma,c}-\tau_{\mu,c}}(z) + \ell_{\mu,c} - \ell_{\sigma,c}, & \z\in\RS_{m,n}^{(1)}, \medskip \\
V^{\tau_{\mu,c}}(z) -2\ell_{\mu,c} - \ell_{\sigma,c}, & \z\in\RS_{m,n}^{(2)},
\end{array}
\right.
\end{equation}
where $c=\frac n{n+m}$. Representation \eqref{logPhi} is not the only way to understand functions $\Phi_{m,n}$. Define
\begin{equation}
\label{hmn}
h_{m,n}(\z) :=
\left\{
\begin{array}{rl}
\displaystyle \int\frac{\mathrm d\tau_{\sigma,c}(x)}{z-x}, & \z\in\RS_{m,n}^{(0)}, \medskip \\
\displaystyle  \int\frac{\mathrm d(\tau_{\mu,c}-\tau_{\sigma,c})(x)}{z-x}, & \z\in\RS_{m,n}^{(1)}, \medskip \\
\displaystyle -\int\frac{\mathrm d\tau_{\mu,c}(x)}{z-x}, & \z\in\RS_{m,n}^{(2)},
\end{array}
\right.
\end{equation}
where, again, $c=\frac n{n+m}$. One can readily observe that
\[
h_{m,n}(z) = 2\partial_z\left(\frac1{n+m}\log|\Phi_{m,n}(\z)|\right)
\]
by \eqref{logPhi} and \eqref{hmn}, where $2\partial_z:=\partial_x-\mathrm i\partial_y$. As $\partial_z$-derivative of a harmonic function is holomorphic, $h_{m,n}$ is a rational function on $\RS_{m,n}$. It also follows from the above relation, that $h_{m,n}$ is the logarithmic derivative of $\Phi_{m,n}$ and therefore
\begin{equation}
\label{exph}
\Phi_{m,n}(\z) = \exp\left\{(n+m)\int^\z h_{m,n}(\x)\mathrm dx\right\},
\end{equation}
where the initial bound for integration is chosen so \eqref{normalization} holds. Moreover, we also can describe the divisor of $h_{m,n}$.

\begin{proposition}
\label{prop:h}
Given $n\leq m$, let $c=\frac n{n+m}$ and $h_{m,n}$ be defined by \eqref{hmn}.  Denote the endpoints of $\Delta_\nu$ by $a_\nu$ and $b_\nu$, $\nu\in\{\mu,\sigma\}$, and arrange them so that
\[
\text{either} \quad b_\mu < a_\mu < a_\sigma < b_\sigma \quad \text{or} \quad b_\sigma < a_\sigma < a_\mu < b_\mu.
\]
If, using the same convention, we denote the endpoints of $\Delta_{\sigma,c}$ by $a_{\sigma,c}$ and $b_{\sigma,c}$, then $a_{\sigma,c}=a_\sigma$. Moreover,  the divisor of $h_{m,n}$ is given by
\begin{equation}
\label{divisor}
\infty^{(0)} + \infty^{(1)} + \infty^{(2)} + \z_{m,n} - \boldsymbol a_\mu - \boldsymbol b_\mu - \boldsymbol a_\sigma - \boldsymbol b_{\sigma,c},
\end{equation}
where $\boldsymbol a_\mu$, $\boldsymbol b_\mu$, $\boldsymbol a_\sigma$, $\boldsymbol b_{\sigma,c}$ are the branch points of $\RS_{m,n}$ with the corresponding projections $a_\mu$, $b_\mu$, $a_\sigma$, $b_{\sigma,c}$, and $\z_{m,n}\in\RS_{m,n}^{(1)}$ with
\[
\pi(\z_{m,n})\in\left\{
\begin{array}{rll}
\big[b_{\sigma,c},\infty\big) & \text{if} & a_\sigma<b_\sigma, \medskip \\
\big(-\infty,b_{\sigma,c}\big] & \text{if} & b_\sigma<a_\sigma.
\end{array}
\right.
\]
Furthermore, $\z_{m,n}=\boldsymbol b_{\sigma,c}$ if and only if $b_{\sigma,c}\in\partial D_{\sigma,c}^-$, see \eqref{domains} and Figure~\ref{fig:domains}, that is, if and only if the domain $D_{\sigma,c}^-$ touches the interval $\Delta_{\sigma,c}$ (observe also that $b_{\sigma,c}=b_\sigma$ if $b_{\sigma,c}\not\in\partial D_{\sigma,c}^-$ since $\Delta_\sigma\setminus\Delta_{\sigma,c}\subset D_{\sigma,c}^-$ by Proposition~\ref{prop:nikishin}).
\end{proposition}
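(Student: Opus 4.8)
The plan is to read the divisor of $h_{m,n}$ off the identity, already noted in the text, that $(n+m)\,h_{m,n}$ is the logarithmic derivative of $\Phi_{m,n}$: as meromorphic differentials on $\RS_{m,n}$ one has $(n+m)\,h_{m,n}\,\mathrm dz=\mathrm d\log\Phi_{m,n}$, where $\mathrm dz=\mathrm d\pi$. Since $\RS_{m,n}$ has genus $0$, every meromorphic differential on it has divisor of degree $-2$, and the divisor of $h_{m,n}$ is the divisor of $\mathrm d\log\Phi_{m,n}$ minus the divisor of $\mathrm dz$. The latter is forced by the ramification of $\pi$: being three-sheeted, unramified over $\infty$, and having exactly the four simple branch points $\boldsymbol a_\mu,\boldsymbol b_\mu,\boldsymbol a_{\sigma,c},\boldsymbol b_{\sigma,c}$, the differential $\mathrm dz$ has a simple zero at each of these branch points and a double pole at each of $\infty^{(0)},\infty^{(1)},\infty^{(2)}$. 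For $\mathrm d\log\Phi_{m,n}$, recall that $\Phi_{m,n}$ has divisor $(n+m)\infty^{(2)}-n\infty^{(0)}-m\infty^{(1)}$ and no other zeros or poles; hence $\mathrm d\log\Phi_{m,n}$ is holomorphic off $\{\infty^{(0)},\infty^{(1)},\infty^{(2)}\}$, where it has simple poles with the nonzero residues $-n$, $-m$, $n+m$, so it has a single simple zero, at a point I call $\z_{m,n}$, and $\z_{m,n}$ is none of the $\infty^{(i)}$ (consistently with the simple zeros of $h_{m,n}$ there coming from \eqref{hmn}). Subtracting, the divisor of $h_{m,n}$ equals $\infty^{(0)}+\infty^{(1)}+\infty^{(2)}+\z_{m,n}-\boldsymbol a_\mu-\boldsymbol b_\mu-\boldsymbol a_{\sigma,c}-\boldsymbol b_{\sigma,c}$, which becomes \eqref{divisor} once $a_{\sigma,c}=a_\sigma$ is known.

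To see $a_{\sigma,c}=a_\sigma$ and the arcsine-type endpoint behaviour that the rest rests on, I would analyse the equilibrium problem \eqref{nikishin}. With $\tau_{\mu,c}$ held fixed, its second line characterizes $\tau_{\sigma,c}$ as the equilibrium measure of mass $c$ on $\Delta_\sigma$ in the external field $-V^{\tau_{\mu,c}}$; since $\supp(\tau_{\mu,c})=\Delta_\mu$ lies entirely on one side of $\Delta_\sigma$, say to the left as in the first ordering, the map $x\mapsto-V^{\tau_{\mu,c}}(x)=\int\log|x-w|\,\mathrm d\tau_{\mu,c}(w)$ is strictly increasing on $\Delta_\sigma$, so by the standard monotonicity property (an increasing external field forces a left-justified support) the support of $\tau_{\sigma,c}$ is a left-justified subinterval, i.e. $a_{\sigma,c}=a_\sigma$, with a hard edge there; likewise the external field $-V^{\tau_{\sigma,c}}$ felt by $\tau_{\mu,c}$ on $\Delta_\mu$ is real-analytic, and since $\supp(\tau_{\mu,c})$ fills all of $\Delta_\mu$ the density of $\tau_{\mu,c}$ is of arcsine type at both $a_\mu$ and $b_\mu$ (the opposite ordering is symmetric). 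Consequently $h_{m,n}$, which equals $-\int\mathrm d\tau_{\mu,c}(x)/(z-x)$ on $\RS_{m,n}^{(2)}$ and $\int\mathrm d\tau_{\sigma,c}(x)/(z-x)$ on $\RS_{m,n}^{(0)}$, has simple poles at $\boldsymbol a_\mu$, $\boldsymbol b_\mu$, and $\boldsymbol a_{\sigma,c}$.

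Next I would locate $\z_{m,n}$. Since $\tau_{\sigma,c}\ge0$, its Cauchy transform has imaginary part of a single sign off $\R$ and is strictly monotone with the correct one-sided limits on each component of $\R\setminus\Delta_{\sigma,c}$; hence $h_{m,n}$ does not vanish on $\RS_{m,n}^{(0)}$ except simply at $\infty^{(0)}$, and likewise, using $\tau_{\mu,c}\ge0$, not on $\RS_{m,n}^{(2)}$ except at $\infty^{(2)}$. All the data being invariant under complex conjugation, $\overline{\z_{m,n}}$ is again a zero of $h_{m,n}$, so by uniqueness $\pi(\z_{m,n})\in\R$; and since $\z_{m,n}$ is not a pole of $h_{m,n}$ it is not $\boldsymbol a_\mu,\boldsymbol b_\mu$, or $\boldsymbol a_{\sigma,c}$. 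Thus either $\z_{m,n}=\boldsymbol b_{\sigma,c}$, or $\z_{m,n}\in\RS_{m,n}^{(1)}$ with $\pi(\z_{m,n})\in\R\setminus(\Delta_\mu\cup\Delta_{\sigma,c})$. On $\RS_{m,n}^{(1)}$ one has $h_{m,n}(z)=\int\mathrm d\tau_{\mu,c}(x)/(z-x)-\int\mathrm d\tau_{\sigma,c}(x)/(z-x)$; for real $z$ strictly between $\Delta_\mu$ and $\Delta_\sigma$ the first term is positive and the second negative, while for real $z$ on the side of $\Delta_\mu$ opposite to $\Delta_\sigma$ both terms are negative but the first dominates in modulus (mass $1\ge c$ on a nearer support); so $h_{m,n}^{(1)}\ne0$ on those two intervals, leaving $\pi(\z_{m,n})\in[b_{\sigma,c},\infty)$ with $\z_{m,n}\in\RS_{m,n}^{(1)}$, or $\z_{m,n}=\boldsymbol b_{\sigma,c}$ (in the opposite ordering $\pi(\z_{m,n})\in(-\infty,b_{\sigma,c}]$).

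It remains to prove $\z_{m,n}=\boldsymbol b_{\sigma,c}$ iff $b_{\sigma,c}\in\partial D_{\sigma,c}^-$. As $\Phi_{m,n}$ is holomorphic and non-vanishing at $\boldsymbol b_{\sigma,c}$, $\mathrm d\log\Phi_{m,n}$ vanishes there exactly when the two branches $\Phi_{m,n}^{(0)},\Phi_{m,n}^{(1)}$ agree to first order in $z-b_{\sigma,c}$; by \eqref{logPhi} the real part of $\log\bigl(\Phi_{m,n}^{(0)}/\Phi_{m,n}^{(1)}\bigr)$ equals $(n+m)F$ with $F:=V^{\tau_{\mu,c}}-2V^{\tau_{\sigma,c}}+3\ell_{\sigma,c}$, the function defining $D_{\sigma,c}^\pm$ in \eqref{domains}. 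Near $b_{\sigma,c}$ and off $\Delta_{\sigma,c}$, $F$ is real-analytic plus the local branch contribution of $V^{\tau_{\sigma,c}}$, and so expands in powers of $(z-b_{\sigma,c})^{1/2}$ starting from the $\tfrac12$-power if $b_{\sigma,c}$ is a hard edge of $\tau_{\sigma,c}$ and from the $\tfrac32$-power if a soft edge; hence the first-order agreement — equivalently $\z_{m,n}=\boldsymbol b_{\sigma,c}$, equivalently the absence of a pole of $h_{m,n}$ at $\boldsymbol b_{\sigma,c}$ — occurs precisely at a soft edge. Finally, a soft edge is equivalent to $b_{\sigma,c}\in\partial D_{\sigma,c}^-$: by \eqref{nikishin}, $F\le0$ on $\Delta_\sigma$ with equality only on $\Delta_{\sigma,c}=[a_\sigma,b_{\sigma,c}]$, so $b_{\sigma,c}<b_\sigma$ (necessarily a soft endpoint) forces $(b_{\sigma,c},b_\sigma]\subset D_{\sigma,c}^-$ and $b_{\sigma,c}\in\partial D_{\sigma,c}^-$; at a genuine hard edge the positive $\tfrac12$-power term makes $F>0$ on a punctured neighbourhood off $\Delta_{\sigma,c}$, so $b_{\sigma,c}\notin\partial D_{\sigma,c}^-$ (and then $b_{\sigma,c}=b_\sigma$, since $\Delta_\sigma\setminus\Delta_{\sigma,c}\subset D_{\sigma,c}^-$ by Proposition~\ref{prop:nikishin}); while a degenerate soft edge at $b_\sigma$, whose leading term is $\propto\re\bigl[(z-b_\sigma)^{3/2}\bigr]$, makes $F<0$ on part of every such neighbourhood, giving $b_{\sigma,c}=b_\sigma\in\partial D_{\sigma,c}^-$. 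The main obstacle is exactly this local analysis at $\boldsymbol b_{\sigma,c}$ — extracting the sign of $F$ throughout a full punctured neighbourhood, not merely along $\R$, from its Puiseux expansion, and matching the soft/hard alternative with the condition $b_{\sigma,c}\in\partial D_{\sigma,c}^-$ — together with securing, uniformly in the admissible $c$, the "monotone field $\Rightarrow$ left-justified support with arcsine endpoints'' facts of the second paragraph, on which the exclusion of $\boldsymbol a_\mu,\boldsymbol b_\mu,\boldsymbol a_{\sigma,c}$ as candidates for $\z_{m,n}$ depends.
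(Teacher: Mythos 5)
Your opening computation---reading \eqref{divisor} off $\mathrm{div}(h_{m,n})=\mathrm{div}(\mathrm d\log\Phi_{m,n})-\mathrm{div}(\mathrm dz)$ on the genus-zero surface---is correct and is arguably a cleaner route to the shape of the divisor than the paper's direct count of zeros and poles; it reduces everything at once to locating the single simple zero $\z_{m,n}$ of $\mathrm d\log\Phi_{m,n}$. The reality of $\pi(\z_{m,n})$ by conjugation symmetry, the exclusion of sheets $0$ and $2$ via monotonicity of Cauchy transforms of positive measures, and the sign comparison on the two real gaps of sheet $1$ away from $[b_{\sigma,c},(-1)^\iota\infty)$ are all sound.

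There are, however, two genuine gaps, both of which you name but neither of which you close, and both of which the paper closes with one elementary device. First, to exclude $\boldsymbol a_\mu,\boldsymbol b_\mu,\boldsymbol a_\sigma$ as possible locations of $\z_{m,n}$ you invoke arcsine (inverse square-root) behaviour of $\tau_{\mu,c}$ at $a_\mu,b_\mu$ and a hard edge of $\tau_{\sigma,c}$ at $a_\sigma$. This is not automatic from \eqref{nikishin}: an endpoint of the prescribed support can in principle be soft, which is exactly the phenomenon under investigation at $\boldsymbol b_{\sigma,c}$, so as written the step assumes at three branch points the dichotomy you are trying to resolve at the fourth. The paper needs no density information: poles of $h_{m,n}$ occur only at the four branch points and are at most simple (since $\Phi_{m,n}$ is holomorphic and non-vanishing there and the local coordinate is $(z-e)^{1/2}$); there are at least three zeros, at the $\infty^{(k)}$; and whenever $\boldsymbol b_{\sigma,c}$ is a pole, \eqref{htoinfty} and \eqref{hpositive} produce by the intermediate value theorem a fourth zero of $h_{m,n}^{(1)}$ on $(b_{\sigma,c},(-1)^\iota\infty)$, so the degree count forces $\boldsymbol a_\mu,\boldsymbol b_\mu,\boldsymbol a_\sigma$ to be poles in either case. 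Second, the equivalence $\z_{m,n}=\boldsymbol b_{\sigma,c}\Leftrightarrow b_{\sigma,c}\in\partial D_{\sigma,c}^-$ hinges on the sign of the leading Puiseux coefficient of $F=V^{\tau_{\mu,c}}-2V^{\tau_{\sigma,c}}+3\ell_{\sigma,c}$ at $b_{\sigma,c}$, which you explicitly defer as ``the main obstacle''. The paper determines it: in the pole case the coefficient $d_{m,n}$ in $h_{m,n}^{(1)}\sim d_{m,n}\big((-1)^\iota(z-b_{\sigma,c})\big)^{-1/2}$ satisfies $(-1)^\iota d_{m,n}<0$ by \eqref{htoinfty} (positivity of $\tau_{\sigma,c}$), and integrating $h_{m,n}^{(0)}-h_{m,n}^{(1)}$ from $b_{\sigma,c}$ gives $F>0$ off $\Delta_{\sigma,c}$ near $b_{\sigma,c}$; in the no-pole case the half-power coefficient $e_{m,n}$ of $h_{m,n}^{(0)}$ satisfies $(-1)^\iota e_{m,n}<0$ because $h_{m,n}^{(0)}$ is monotone on $(b_{\sigma,c},(-1)^\iota\infty)$ and obeys \eqref{hpositive}, giving $F<0$ on the adjacent real segment. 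Your soft/hard-edge dictionary is the right picture, but without these sign determinations (and without ruling out higher-order vanishing of the density at a soft edge) the ``if and only if'' is not established. Note also that the paper obtains $a_{\sigma,c}=a_\sigma$ by repeating the endpoint analysis at $\boldsymbol a_{\sigma,c}$, not by your monotone-external-field argument; the latter is a legitimate alternative but would likewise require proof.
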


We prove Proposition~\ref{prop:h} in Section~\ref{ssec:Phi}. We clearly see from Proposition~\ref{prop:h} that the function $h_{m,n}$ is algebraic. More precisely, Proposition~\ref{prop:h} yields the following.

\begin{corollary}
If $b_{\sigma,c}\not\in\partial D_{\sigma,c}^-$, in which case $b_{\sigma,c}=b_\sigma$, then $h_{m,n}$ is the solution of the algebraic equation
\begin{equation}
\label{eqh1}
h^3-(1-\varkappa)\frac{P_2(z)}{\Pi(z)}\,h - \varkappa \frac{P_1(z)}{\Pi(z)}=0, \quad \varkappa=c-c^2,
\end{equation}
where  $c=\frac n{n+m}$,
\[
\Pi(z)=(z-a_{\mu})(z-b_\mu)(z-a_{\sigma}) (z-b_\sigma),
\]
and the polynomials $P_j$ are monic and of degree $j$, $j=1,2$. The three zeros of the polynomials $P_1$ and $P_2$ are determined by the three conditions that the discriminant of \eqref{eqh1}, i.e.,
\[
\frac1{\Pi^3(z)}\left[\left(\frac{1-\kappa}3P_2(z)\right)^3-\left(\frac\kappa2P_1(z)\right)^2\Pi(z)\right],
\]
has zeros of even multiplicity only and that the Riemann surface of the solution of \eqref{eqh1} must be as on Figure~\ref{fig:surface}.

If $b_{\sigma,c}\in\partial D_{\sigma,c}^-$, in which case $\z_{m,n}=\boldsymbol b_{\sigma,c}$, then $h_{m,n}$ is the solution of the algebraic equation
\begin{equation}
\label{eqh2}
h^3-(1-\varkappa)\frac{\widetilde{P}_{1}(z)}{\Pi(z)}\,h-\frac{\varkappa}{\Pi(z)}=0,
\end{equation}
where this time $\Pi(z)=(z-a_{\mu})(z-b_\mu)(z-a_{\sigma})$ and the only zero of the monic polynomial $\widetilde{P}_1$ is determined analogously to the first case.
\end{corollary}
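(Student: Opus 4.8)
The plan is to read the algebraic equation for $h_{m,n}$ directly off the divisor \eqref{divisor} provided by Proposition~\ref{prop:h}, exploiting that $\RS_{m,n}$ has genus $0$ and that $h_{m,n}$ is a degree-three (as a function of $z$) rational function on it. First I would note that, because $\pi:\RS_{m,n}\to\overline\CP$ is a three-sheeted cover, the three pull-backs $h_{m,n}^{(0)},h_{m,n}^{(1)},h_{m,n}^{(2)}$ are precisely the three branches of an algebraic function of degree $3$ over $z$, so $h_{m,n}$ satisfies
\begin{equation}
\label{eqh-gen}
h^3 - e_1(z)\,h^2 + e_2(z)\,h - e_3(z) = 0,
\end{equation}
where $e_1=\sum_i h_{m,n}^{(i)}$, $e_2=\sum_{i<j}h_{m,n}^{(i)}h_{m,n}^{(j)}$, $e_3=\prod_i h_{m,n}^{(i)}$ are symmetric in the branches, hence single-valued rational functions of $z$. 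The first key step is to show $e_1\equiv 0$: this follows immediately from the definition \eqref{hmn}, since $h_{m,n}^{(0)}+h_{m,n}^{(1)}+h_{m,n}^{(2)} = \int\frac{\mathrm d\tau_{\sigma,c}}{z-x}+\int\frac{\mathrm d(\tau_{\mu,c}-\tau_{\sigma,c})}{z-x}-\int\frac{\mathrm d\tau_{\mu,c}}{z-x}=0$ identically. So \eqref{eqh-gen} reduces to $h^3+e_2 h-e_3=0$.

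Next I would pin down $e_2$ and $e_3$ as rational functions by locating their zeros and poles using the divisor \eqref{divisor} together with the behaviour at infinity and at the branch points. The poles of $e_2$ and $e_3$ can only sit over $\pi(\z_{m,n})$ and over the branch points $a_\mu,b_\mu,a_\sigma,b_{\sigma,c}$ where $h_{m,n}$ itself has simple poles; a local analysis at a branch point (where two branches swap and each behaves like $(z-\text{branch pt})^{-1/2}$ times a unit) shows that $e_2$ has at worst a simple pole and $e_3$ at worst a pole of order $3/2$... more precisely one checks that $\Pi(z)e_2(z)$ and $\Pi(z)e_3(z)$ are polynomials, where $\Pi$ is the product of the $(z-\text{branch pt})$ factors — in the generic case $b_{\sigma,c}=b_\sigma$ all four branch points are the endpoints of $\Delta_\mu\cup\Delta_\sigma$, giving $\Pi(z)=(z-a_\mu)(z-b_\mu)(z-a_\sigma)(z-b_\sigma)$, while in the degenerate case $\z_{m,n}=\boldsymbol b_{\sigma,c}$ the point $b_{\sigma,c}$ is an ordinary (non-branch) point of $h_{m,n}$, it contributes no pole, and the simple zero at $\z_{m,n}$ has to be absorbed, which drops the degree of $\Pi$ by one to $(z-a_\mu)(z-b_\mu)(z-a_\sigma)$ and forces the numerator of the constant term to lose a factor. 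Then the behaviour at the three points at infinity — from \eqref{hmn}, $h_{m,n}^{(i)}(z)=\mp\frac{|\tau|}{z}+O(z^{-2})$ with masses $c$, $1-2c$... wait, $|\tau_{\mu,c}-\tau_{\sigma,c}|$ need not be $1-c$ in general, but the total masses are $c$, $1-c$ (with sign), $-1$ on sheets $0,1,2$ — pins down the leading coefficients: writing $P_2=\Pi e_2$ and $P_1=-\Pi e_3$ (with appropriate sign) one reads off $\deg P_2=2$, $\deg P_1\le 1$, and the leading coefficients via Newton's identities applied to the expansions at $\infty$, producing $e_2\sim -(1-\varkappa)z^{-2}$ and $e_3\sim \varkappa z^{-3}$ with $\varkappa = c(1-c) = c - c^2$. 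Normalizing $P_1,P_2$ to be monic then gives exactly \eqref{eqh1} and \eqref{eqh2}.

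Finally I would argue the characterization of the free coefficients. The discriminant of $h^3 + e_2 h - e_3 = 0$ as a function of $z$ vanishes exactly where two branches of $h_{m,n}$ collide, i.e.\ at the branch points of the algebraic function; since $\RS_{m,n}$ has the prescribed simple branch points $a_\mu,b_\mu,a_\sigma,b_{\sigma,c}$ (each of order two) and \emph{no} others, every zero of the discriminant that is not one of these must come with even multiplicity (a place where two branches coincide in value but the cover is unramified), which is exactly the stated condition. Combined with the requirement that the associated Riemann surface be the one of Figure~\ref{fig:surface} — in particular that $\Delta_{\sigma,c}=[a_{\sigma,c},b_{\sigma,c}]=[a_\sigma,b_{\sigma,c}]$ be a genuine cut joining two branch points — this imposes the right number of scalar conditions ($3$ unknown zeros in the generic case, $1$ in the degenerate case) to determine $P_1,P_2$ (resp.\ $\widetilde P_1$) uniquely. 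I expect the main obstacle to be the bookkeeping at the branch points and at $\infty^{(1)}$ in the degenerate case: one must verify carefully that when $\z_{m,n}$ merges with $\boldsymbol b_{\sigma,c}$ the constant term's numerator degenerates to a constant (yielding $\varkappa/\Pi(z)$ with the reduced $\Pi$) rather than remaining linear — this is a direct consequence of \eqref{divisor}, since then $h_{m,n}$ has no pole over $b_{\sigma,c}$ and its unique extra zero $\z_{m,n}$ sits on sheet $1$ over $b_{\sigma,c}$, but making the degree count airtight requires matching the divisor of $e_3=\prod_i h_{m,n}^{(i)}$ sheet by sheet.
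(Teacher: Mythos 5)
Your derivation is correct and is precisely the argument the paper intends: the corollary is asserted as an immediate consequence of Proposition~\ref{prop:h} with no separate proof given, and your route --- $e_1\equiv0$ directly from \eqref{hmn}, the poles of $e_2,e_3$ read off the divisor \eqref{divisor} (with $\Pi$ losing the factor $z-b_{\sigma,c}$ exactly when $\z_{m,n}=\boldsymbol b_{\sigma,c}$), the leading coefficients fixed by the residues $c$, $1-c$, $-1$ at the three points over infinity, and the even-multiplicity discriminant condition away from the prescribed branch points --- is the natural way to make that implication explicit. One pedantic remark: a literal computation from \eqref{hmn} gives $e_3=h^{(0)}h^{(1)}h^{(2)}\sim-\varkappa z^{-3}$, so the constant term of the monic cubic $h^3+e_2h-e_3$ comes out as $+\varkappa P_1/\Pi$ with $P_1$ monic rather than $-\varkappa P_1/\Pi$; your sign agrees with the paper's \eqref{eqh1}, and this (discriminant-irrelevant, hence \eqref{eqh4}-irrelevant) discrepancy appears to lie in the stated formula rather than in your method.
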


Let us point out that if we take $a_{\mu}=a_{\sigma}=:a$ in \eqref{eqh2}, then $\widetilde{P}_1(z)=z-a$ and \eqref{eqh2} becomes
\begin{equation}
\label{eqh3}
h^3-\frac{(1-\varkappa)}{(z-b_{\mu})(z-a)}\,h - \frac{\varkappa}{(z-b_{\mu})(z-a)^{2}}=0.
\end{equation}
The only zero of the discriminant of \eqref{eqh3} is exactly $b_{\sigma,c}$ and is equal to
\begin{equation}\label{eqh4}
b_{\sigma,c}=\displaystyle\frac{\left(\frac{1-\varkappa}{3}\right)^{3}a-
\left(\frac{\varkappa}{2}\right)^{2}b_\mu}
{\left(\frac{1-\varkappa}{3}\right)^{3}-\left(\frac{\varkappa}{2}\right)^{2}}.
\end{equation}
Explicit expression \eqref{eqh3} allows us to numerically compute the boundary $\partial D_{\sigma,c}^+$, which is the trajectory $\Re\left[ {(h_{m,n}^{(0)}(z)-h_{m,n}^{(1)}(z))\mathrm dz}\right]=0$ emanating from $b_{\sigma,c}$, see Figure~\ref{fig:DivZone}.
\begin{figure}[ht!]
\centering
\includegraphics[scale=.4]{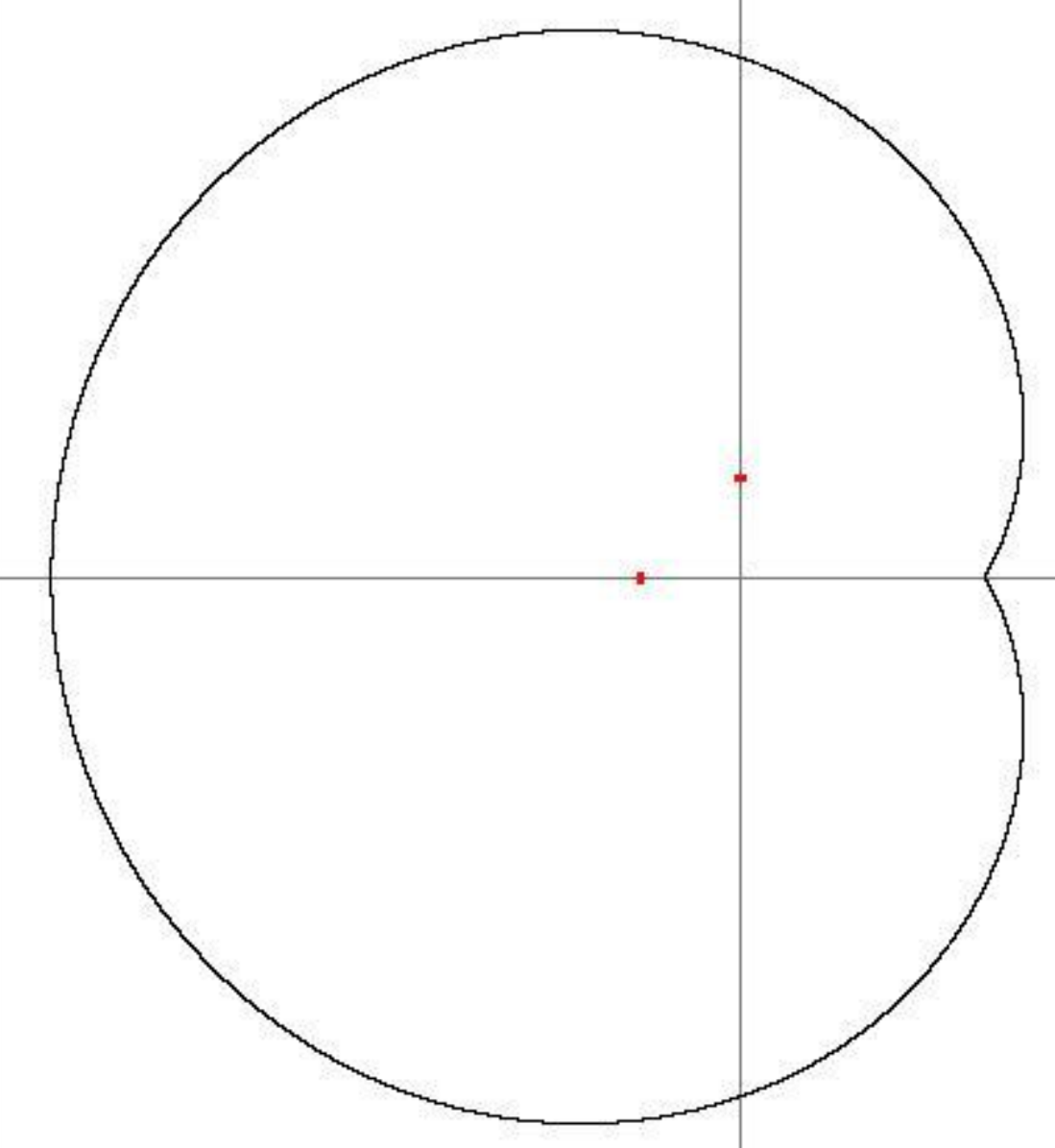}
\begin{picture}(0,0)
\put(-90,119){$\partial D_{\sigma,c}^+$}
\put(-41,69){\circle*{1}}
\put(-71,68){$\phantom{a}_{-1}$}
\put(-42,68){$\phantom{a}_{0}$}
\put(-36,80){$i$}
\put(-7,65){$b_{\sigma,c}$}
\put(-11,68){\circle*{1}}
\end{picture}
\caption{\small The curve $\partial D_{\sigma,c}^+$ numerically computed for parameters $b_\mu=-1$, $a_\mu=a_\sigma=a=0$, and $c=1/3$ (in this case $b_{\sigma,c}=2.43$).}
\label{fig:DivZone}
\end{figure}

Let us now specify which measures $\mu$ and $\sigma$ we consider. We shall assume that
\begin{equation}
\label{measures}
\mathrm d\nu(x) = \frac{\rho_\nu(x)}{2\pi\mathrm i}\frac{\mathrm dx}{w_\nu^+(x)}, \quad \nu\in\{\mu,\sigma\},
\end{equation}
where $\rho_\nu$ is a non-vanishing and holomorphic function in some neighborhood of $\Delta_\nu$ and
\[
w_\nu(z) := \sqrt{(z-a_\nu)(z-b_\nu)}
\]
is the branch holomorphic in $\mathbb C\setminus\Delta_\mu$ and normalized so that $w_\nu(z)/z\to1$ as $z\to\infty$. We define $w_{\sigma,c}(z)$ analogously.

As expected from the classical theory of orthogonal polynomials, we need to introduce an appropriate analog of the Szeg\H{o} function for the measures $\mu$ and $\sigma$. This is precisely the content of Proposition~\ref{prop:Szego} below. Its statement is a direct application of  \cite[Proposition~4]{uYa} with $\rho_1=w_\sigma^+/(\rho_\sigma (w_{\sigma,c}^+)^2)$ and $\rho_2=\rho_\mu/(w_{\sigma,c}w_\mu^+)$ (one needs to notice that the labeling of the sheets $\RS^{(0)}$ and $\RS^{(1)}$ is reversed there and the restriction $\alpha_{ij}>-1$ in \cite[Eq.~(23)]{uYa} is needed to make functions $\rho_i$ integrable and is not important for \cite[Proposition~4]{uYa} itself).

\begin{proposition}
\label{prop:Szego}
There exists a holomorphic and non-vanishing function on $\RS_c\setminus(\boldsymbol\Delta_\mu\cup\boldsymbol\Delta_{\sigma,c})$, say $S_c$, that has continuous traces on $\boldsymbol\Delta_\mu\cup\boldsymbol\Delta_{\sigma,c}\setminus\{\boldsymbol a_\mu,\boldsymbol b_\mu,\boldsymbol a_\sigma,\boldsymbol b_{\sigma,c}\}$, satisfies
\begin{equation}
\label{S-jump}
S_c^{(1)\pm}(x) =
\left\{
\begin{array}{ll}
S_c^{(0)\mp}(x)(\rho_\sigma w_{\sigma,c}^+/w_\sigma^+)(x), & x\in\Delta_{\sigma,c}^\circ, \medskip \\
S_c^{(2)\mp}(x)(w_{\sigma,c}/\rho_\mu)(x), & x\in\Delta_\mu^\circ,
\end{array}
\right.
\end{equation}
where $\Delta^\circ$ is the interior of the closed interval $\Delta$, is bounded around $\boldsymbol a_\mu,\boldsymbol b_\mu,\boldsymbol a_\sigma$ as well as $\boldsymbol b_{\sigma,c}$ when $b_{\sigma,c}=b_\sigma$, and behaves like $\big|S_c^{(1)}(z)\big| \sim \big|S_c^{(0)}(z)\big|^{-1} \sim |z-b_{\sigma,c}|^{1/4}$ as $z\to b_{\sigma,c}\neq b_\sigma$. Moreover, it holds that $S_c^{(0)}S_c^{(1)}S_c^{(2)}\equiv1$.
\end{proposition}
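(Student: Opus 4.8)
The statement is a scalar, multiplicative Riemann--Hilbert problem on the genus-$0$ surface $\RS_c$, and the plan is to solve it by the standard ``Szeg\H{o} function on a Riemann surface'' recipe: pass to logarithms to obtain an \emph{additive} scalar problem, solve that with a Cauchy-type transform over $\boldsymbol\Delta_\mu\cup\boldsymbol\Delta_{\sigma,c}$, and take care of the branch points by an explicit algebraic model. I would begin by recording the jump data. The cuts $\boldsymbol\Delta_{\sigma,c}=\pi^{-1}(\Delta_{\sigma,c})$ and $\boldsymbol\Delta_\mu=\pi^{-1}(\Delta_\mu)$ are two \emph{disjoint} cycles of $\RS_c$ (each a Jordan curve through the corresponding pair of branch points) that glue $\RS_c^{(0)}$ to $\RS_c^{(1)}$ and $\RS_c^{(1)}$ to $\RS_c^{(2)}$, respectively; by \eqref{S-jump}, crossing the first multiplies $S_c$ by $g_\sigma:=\rho_\sigma w_{\sigma,c}^+/w_\sigma^+$ and crossing the second by $g_\mu:=w_{\sigma,c}/\rho_\mu$. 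Since $\Delta_\mu\cap\Delta_{\sigma,c}=\varnothing$, the factor $g_\mu$ and the factor $\rho_\sigma$ inside $g_\sigma$ are holomorphic and non-vanishing in simply connected neighborhoods of the relevant intervals, so their logarithms are single-valued there; the remaining factor $w_{\sigma,c}^+/w_\sigma^+$ is non-vanishing on $\Delta_{\sigma,c}^\circ$, and — because $a_{\sigma,c}=a_\sigma$ — the two radicals have square-root zeros at the same point $a_\sigma$, so the ratio stays bounded and non-vanishing there; it also stays bounded and non-vanishing at $b_{\sigma,c}$ \emph{unless} $b_{\sigma,c}\neq b_\sigma$, in which case $w_{\sigma,c}^+$ has a simple zero at $b_{\sigma,c}$ while $w_\sigma^+$ does not, so the ratio acquires a square-root zero. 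Thus the only possible branch-point singularity of $S_c$ sits at $\boldsymbol b_{\sigma,c}$ and occurs exactly when $b_{\sigma,c}\neq b_\sigma$; near $\boldsymbol a_\mu,\boldsymbol b_\mu,\boldsymbol a_\sigma$ (where $g_\mu$, resp.\ the radical ratio, is holomorphic and non-vanishing) $S_c$ must be bounded.

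For the construction, using the genus-$0$ realization of $\RS_c$ (under a conformal identification $\RS_c\cong\overline\CP$ the cuts become two disjoint Jordan curves), I would produce the ``regular'' part of the jump — the $\rho_\nu$-factors and the holomorphic non-vanishing factor $w_{\sigma,c}$ appearing in $g_\mu$ on $\Delta_\mu$ — as the exponential of a Cauchy transform of its logarithm over $\boldsymbol\Delta_\mu\cup\boldsymbol\Delta_{\sigma,c}$ (Plemelj's formula gives the prescribed jump, and on the genus-$0$ surface the transform yields a holomorphic, non-vanishing and bounded contribution off the cuts). The square-root factor $w_{\sigma,c}^+/w_\sigma^+$ on $\Delta_{\sigma,c}$ — present only when $b_{\sigma,c}\neq b_\sigma$ — I would realize by an explicit algebraic function on $\RS_c$ assembled from $w_\mu,w_\sigma,w_{\sigma,c}$, which is bounded and non-vanishing away from $\boldsymbol b_{\sigma,c}$ and behaves like $|z-b_{\sigma,c}|^{\pm1/4}$ at $\boldsymbol b_{\sigma,c}$. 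To pin down the exponents of this model: the jump forces $S_c^{(1)}/S_c^{(0)}\sim(z-b_{\sigma,c})^{1/2}$ near $b_{\sigma,c}$; since $\boldsymbol b_{\sigma,c}$ joins only $\RS_c^{(0)}$ and $\RS_c^{(1)}$, the branch $S_c^{(2)}$ is bounded and non-vanishing there, so the required normalization $S_c^{(0)}S_c^{(1)}S_c^{(2)}\equiv1$ makes $S_c^{(0)}S_c^{(1)}$ bounded and non-vanishing, whence $\big|S_c^{(1)}\big|\sim\big|S_c^{(0)}\big|^{-1}\sim|z-b_{\sigma,c}|^{1/4}$, as claimed. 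Multiplying the two contributions gives a function with the prescribed jumps and local behavior; its product $S_c^{(0)}S_c^{(1)}S_c^{(2)}$ over the three sheets is invariant under the sheet interchanges, has no zeros or poles, hence is single-valued, holomorphic and non-vanishing on $\overline\CP$, i.e.\ a constant, and rescaling $S_c$ by one constant (which multiplies that product by the cube of the constant) normalizes it to $1$.

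The main obstacle is the branch-point bookkeeping together with the behavior at infinity: one must verify that the half-integer powers of the radicals can be organized into a single-valued algebraic function on $\RS_c$ off the cuts with \emph{only} the stated $\pm1/4$ singularity, and that neither this model nor the Cauchy transform introduces spurious zeros, poles, or growth at the three points $\infty^{(0)},\infty^{(1)},\infty^{(2)}$ — that is, that the index of the problem is exactly accounted for. This is precisely the content of \cite[Proposition~4]{uYa}; applying it with $\rho_1=w_\sigma^+/\big(\rho_\sigma(w_{\sigma,c}^+)^2\big)$ and $\rho_2=\rho_\mu/(w_{\sigma,c}w_\mu^+)$, and accounting for the reversal of the labels $\RS^{(0)},\RS^{(1)}$ noted in the text, gives the proposition.
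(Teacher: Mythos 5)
Your proposal is correct and lands exactly where the paper does: the paper's entire ``proof'' of Proposition~\ref{prop:Szego} is the citation of \cite[Proposition~4]{uYa} with the same data $\rho_1=w_\sigma^+/\big(\rho_\sigma(w_{\sigma,c}^+)^2\big)$ and $\rho_2=\rho_\mu/(w_{\sigma,c}w_\mu^+)$ and the same remark about the reversed sheet labels, which is precisely your final step. Your preliminary sketch of the underlying Szeg\H{o}-type construction (logarithmic Cauchy transform for the regular part, explicit algebraic model for the $|z-b_{\sigma,c}|^{\pm1/4}$ behavior pinned down via $S_c^{(0)}S_c^{(1)}S_c^{(2)}\equiv1$) is a sound account of what that cited proposition accomplishes, so the two arguments are essentially identical.
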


Now we are ready to state our main result.

\begin{theorem}
\label{thm:Frobenius}
Let $\mu$ and $\sigma$ be of the form \eqref{measures} and assume that $\mu$ possesses the full system of orthonormal polynomials. Assume further that  $n-1\leq m$ and $\frac n{n+m}\to c>0$ as $n\to\infty$. Then for all $n$ large, $(m,n)$-th Frobenius-Pad\'e approximant $P_{m,n}/Q_{m,n}$ is unique and $\deg(Q_{m,n})=n$. Moreover, if $K\subset \overline\CP\setminus\Delta_\sigma$ is closed, then
\begin{equation}
\label{asymptotics1}
\left\{
\begin{array}{ll}
Q_{m,n} = \big[ 1 + o(1) \big] \Phi_{m+1,n}^{(0)}S_c^{(0)} & \text{on} \quad K, \medskip \\
Q_{m,n} = \big[ 1 + o(1) \big] \Phi_{m+1,n}^{(0)+}S_c^{(0)+} + \big[ 1 + o(1) \big] \Phi_{m+1,n}^{(0)-}S_c^{(0)-} & \text{on} \quad \Delta_{\sigma,c}^\circ,
\end{array}
\right.
\end{equation}
where $o(1)$ is uniform on $K$ and locally uniform on $\Delta_{\sigma,c}^\circ$; however, if $\Delta_{\sigma,c}\cap\partial D_{\sigma,c}^-=\varnothing$, then $\Delta_{\sigma,c}=\Delta_\sigma$ and $o(1)=\mathcal O(C_{\mu,\sigma}^{-n})$ for some constant $C_{\mu,\sigma}>1$ with the second equality holding uniformly on $\Delta_\sigma$. Furthermore, if $K\subset \overline\CP\setminus(\Delta_\sigma\cup\Delta_\mu)$ is closed, then
\begin{equation}
\label{asymptotics2}
\left\{
\begin{array}{ll}
w_{\sigma,c} R_{m,n} =   \big[ 1 + o(1) \big] \Phi_{m+1,n}^{(1)}S_c^{(1)}, & \text{on} \quad K, \medskip \\
w_{\sigma,c}^\pm R_{m,n}^\pm =   \big[ 1 + o(1) \big] \Phi_{m+1,n}^{(1)\pm}S_c^{(1)\pm}, & \text{on} \quad \Delta_{\sigma,c}^\circ, \medskip \\
w_{\sigma,c} R_{m,n}  =  \big[ 1 + o(1) \big] \Phi_{m+1,n}^{(1)+}S_c^{(1)+} +  \big[ 1 + o(1) \big] \Phi_{m+1,n}^{(1)-}S_c^{(1)-}, & \text{on} \quad \Delta_\mu^\circ,
\end{array}
\right.
\end{equation}
where $R_{m,n}=Q_{m,n}\widehat\sigma-P_{m,n}$ and $o(1)$ has the same properties as in \eqref{asymptotics1}.
\end{theorem}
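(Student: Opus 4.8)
The plan is to establish \eqref{asymptotics1}–\eqref{asymptotics2} by the Riemann–Hilbert/steepest-descent method following Nuttall's idea, realized through the approach of \cite{uYa}. The starting point is to recast the Frobenius–Pad\'e conditions \eqref{Frobenius} and the orthogonality relations \eqref{ortho}–\eqref{ortho-V} as a $3\times3$ matrix Riemann–Hilbert problem, call it \rhy, whose solution $\boldsymbol Y$ encodes $Q_{m,n}$, $R_{m,n}$, and an appropriate second-kind function in its first column, with jumps supported on $\Delta_\mu\cup\Delta_\sigma$ built out of the densities \eqref{measures} and the polynomials $p_n$ (the latter entering through the Cauchy kernel in the $p_n$-expansion). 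Because $\mu$ has the form \eqref{measures}, the orthonormal polynomials $p_n$ themselves admit Szeg\H{o}-type asymptotics on $\overline{\CP}\setminus\Delta_\mu$, which one substitutes to reduce the problem to one with purely ``algebraic'' jumps. First I would carry out this reformulation and verify that a solution of \rhy\ yields a Frobenius–Pad\'e approximant with $\deg(Q_{m,n})=n$, thereby also getting the claimed uniqueness for $n$ large.

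Next comes the sequence of transformations $\boldsymbol Y\to\boldsymbol X\to\boldsymbol S\to\boldsymbol Z$. The global parametrix is assembled from the genus-zero functions already provided: the scalar functions $\Phi_{m+1,n}$ of \eqref{exph} (whose moduli realize the equilibrium potentials \eqref{logPhi}, with the shift $m\mapsto m+1$ dictated by the degree count for $R_{m,n}$ and by \eqref{ortho}) and the Szeg\H{o} function $S_c$ of Proposition~\ref{prop:Szego}; one forms the $3\times3$ matrix $\boldsymbol N$ whose rows are the pull-backs $\Phi_{m+1,n}^{(i)}S_c^{(i)}$ and their analogues, arranged so that \eqref{S-jump} and the jump structure of $\Phi_{m+1,n}$ across $\boldsymbol\Delta_\mu\cup\boldsymbol\Delta_{\sigma,c}$ exactly match the jumps of $\boldsymbol S$. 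The opening of lenses around $\Delta_{\sigma,c}$ (and, where needed, around $\Delta_\mu$) is governed by the sign of $\Re\int(h_{m,n}^{(0)}-h_{m,n}^{(1)})\,\mathrm dz$, i.e.\ precisely by the domains $D_{\sigma,c}^{\pm}$ of \eqref{domains}; Proposition~\ref{prop:nikishin} guarantees that the lens contours can be drawn inside $D_{\sigma,c}^+$ so that the jump for $\boldsymbol S$ on the lens boundaries is exponentially close to the identity, uniformly away from the branch points.

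Local parametrices must then be constructed at the four branch points $a_\mu,b_\mu,a_\sigma,b_{\sigma,c}$. At a branch point where $S_c$ is bounded one uses the standard Airy parametrix; the delicate case is $b_{\sigma,c}\neq b_\sigma$, where the $|z-b_{\sigma,c}|^{\pm1/4}$ behavior of $S_c$ signals that $\Delta_{\sigma,c}$ is a soft edge abutting the divergence region $D_{\sigma,c}^-$, and here — this is where Proposition~\ref{prop:h} and the location of the extra divisor point $\z_{m,n}=\boldsymbol b_{\sigma,c}$ enter — one needs the Airy model adapted to the confluence of the zero of $h_{m,n}$ with the branch point. Matching the local and global parametrices on small circles then gives an error estimate of order $\mathcal O(1/n)$ generically, and $\mathcal O(C_{\mu,\sigma}^{-n})$ in the fully ``hard-edge'' situation $\Delta_{\sigma,c}=\Delta_\sigma$, where no soft edge occurs and all lens jumps are uniformly exponentially small; this dichotomy is exactly the one recorded in the statement. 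The small-norm theory for $\boldsymbol Z=\boldsymbol S\boldsymbol N^{-1}$ (or the local product off the circles) yields $\boldsymbol Z=I+o(1)$, and unravelling the transformations back to $\boldsymbol Y$ translates into \eqref{asymptotics1} for $Q_{m,n}$ (first row/column of $\boldsymbol Y$) and \eqref{asymptotics2} for $w_{\sigma,c}R_{m,n}$ (the entry carrying the linear form, which naturally picks up the sheet-$(1)$ data and the algebraic factor $w_{\sigma,c}$ from the local behavior of the parametrix near $\Delta_{\sigma,c}$). The main obstacle, as usual in these $3\times3$ problems, is the construction and matching of the local parametrix at the soft edge $b_{\sigma,c}$ when it is genuinely interior to $\Delta_\sigma$; everything else is bookkeeping of signs, degrees, and the already-available scalar functions $\Phi_{m+1,n}$ and $S_c$.
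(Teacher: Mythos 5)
Your outline has the same skeleton as the paper's argument (a $3\times3$ Riemann--Hilbert problem, a global parametrix built from $\Phi_{m+1,n}$ and the Szeg\H{o} function of Proposition~\ref{prop:Szego}, and the dichotomy between exponential and $\mathcal O(1/n)$ error rates according to whether $\Delta_{\sigma,c}$ meets $\partial D_{\sigma,c}^-$), but two of your steps would not go through as described. First, the polynomials $p_n$ play no role in the Riemann--Hilbert problem and no Szeg\H{o} asymptotics for them are needed: conditions \eqref{Frobenius} are equivalent to $\int x^iR_{m,n}\,\mathrm d\mu=0$ for $i\in\{0,\dots,m+n\}$, so the jumps of $\boldsymbol Y$ involve only the densities $\rho_\nu/w_\nu^+$ from \eqref{measures}, and the orthogonality is encoded in the order of vanishing at infinity of the Cauchy transform $C_{m,n}(z)=\int R_{m,n}(x)(x-z)^{-1}\mathrm d\mu(x)$, which occupies the third column. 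Relatedly, the second and third rows must be populated by the $(m+1,n-1)$-st and $(m,n-1)$-st approximants so that the normalization at infinity can be achieved; the uniqueness and $\deg(Q_{m,n})=n$ claims then come from the unique solvability of the problem, not merely from exhibiting a solution.

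Second, and more seriously, the local parametrices you propose are the wrong ones. In the paper the lenses are one-sided ($\Omega_\nu$ is bounded by $\Gamma_\nu\cup\Delta_\nu$ with $\Gamma_\nu$ a closed curve encircling $\Delta_\nu$), the rotated jump on $\Delta_\nu^\circ$ is matched exactly by the global parametrix, and consequently no Airy parametrices are needed at $a_\mu,b_\mu,a_\sigma$; in the case $\Delta_{\sigma,c}\cap\partial D_{\sigma,c}^-=\varnothing$ there are no local parametrices at all, which is precisely why the error is geometric there. When $b_{\sigma,c}\in\partial D_{\sigma,c}^-$, the local model at $b_{\sigma,c}$ is not an Airy model, adapted or otherwise: $b_{\sigma,c}$ is then an interior point of $\Delta_\sigma$, so the jump density $\rho_\sigma/w_\sigma^+$ does not vanish there while the density of $\tau_{\sigma,c}$ vanishes like a square root, and the parametrix in $U_{b_{\sigma,c}}$ must simultaneously absorb the residual jump on $\Delta_\sigma\setminus\Delta_{\sigma,c}$. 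The correct model is the Painlev\'e~XXXIV Riemann--Hilbert problem of Its, Kuijlaars and \"Ostensson, imported from \cite{uYa}; it is also what limits the error to $\mathcal O(\varepsilon_{m+1,n})$ with $\varepsilon_{m+1,n}$ of order $1/n$ at best. At $b_\sigma\neq b_{\sigma,c}$ the parametrix is an explicit elementary Cauchy-transform correction, again not Airy.
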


\begin{remark}
Polynomial $Q_{m,n}$ is defined up to a multiplicative constant. However, choosing $Q_{m,n}$ uniquely determines $P_{m,n}$, and respectively $R_{m,n}$. Polynomials $Q_{m,n}$ in \eqref{asymptotics1} are normalized so that the leading coefficient is equal to the coefficient of $\Phi_{m+1,n}^{(0)}S_c^{(0)}$ next to $z^n$ when the latter function is developed into a power series at infinity.
\end{remark}

\begin{remark}
The proof of Theorem~\ref{thm:Frobenius} follows the framework of Riemann-Hilbert analysis for orthogonal polynomials formulated by Fokas, Its, and Kitaev \cite{FIK91,FIK92}, in which Frobenius-Pad\'e approximants are characterized via a certain Riemann-Hilbert problem whose solution is obtained using a variation of Deift and Zhou steepest descent method \cite{DZ93}. In this realm of ideas it is well understood that one can introduce Fisher-Hartwig singularities into \eqref{measures}. That is, \eqref{measures} can be replaced by
\[
\mathrm d\nu(x) = \rho_\nu(x)\prod_{i=0}^{I_\nu}|x-x_{i,\nu}|^{\alpha_{i,\nu}}\prod_{i=1}^{I_\nu}\left\{ \begin{array}{ll} 1, & x<x_{i,\nu} \\ \beta_{i,\nu}, &x>x_{i,\nu} \end{array} \right\}\mathrm dx,
\]
where $\rho_\nu$ is as before, $a_\nu=x_{0,\nu}<x_{1,\nu}<\cdots<x_{I_\nu-1,\nu}<x_{I_\nu,\nu}=b_\nu$, $\alpha_{i,\nu}>-1$, and $\beta_{i,\nu}\not\in(-\infty,0]$, \cite{Van03,FMMFSou10,FMMFSou11,DIKr11,uYa}. Implementing such a modification is rather lengthy as details are very technical and does not provide any additional insight on the behavior of the approximants. Thus, we opted to consider only the measures of the form \eqref{measures}.
\end{remark}

\begin{remark}
As was noticed in \cite{Bogolubs16}, in the case of positive measures $\mu$ and $\sigma$ in \eqref{expansion}, \eqref{markov}, the statement of Theorem~\ref{thm:Frobenius} for the diagonal sequence  $(n-1,n)$ follows from the theorem on strong asymptotics of multiple orthogonal polynomials from~\cite{Apt99}.
\end{remark}

\begin{remark}
By the definition of the linear forms $R_{m,n}$, \eqref{asymptotics1}--\eqref{asymptotics2}, and \eqref{logPhi}, it holds that the error of approximation by Frobenius-Pad\'e approximants behaves like
\[
\big|\widehat\sigma - P_{m,n}/Q_{m,n}\big| \sim \left| \Phi_{m+1,n}^{(1)}/\Phi_{m+1,n}^{(0)} \right|  = \exp\left\{-(n+m+1)\left(V^{\tau_{\mu,c}-2\tau_{\sigma,c}}+3\ell_{\sigma,c}\right)\right\}.
\]
Hence, the approximants converge to $\widehat\sigma$ uniformly on compact subsets of $D_{\sigma,c}^+$ and diverge uniformly on compact subsets of $D_{\sigma,c}^-$. It also follows from Proposition~\ref{prop:nikishin}, that they converge locally uniformly in $\CP\setminus\Delta_\sigma$ only if $m=n+o(n)$.
\end{remark}

\section{Proofs}

\subsection{Functions $h_{m,n}$ and $\Phi_{m,n}$}
\label{ssec:Phi}

For the proof of Proposition~\ref{prop:h}, put $c=\frac n{n+m}$. To find the divisor of $h_{m,n}$, observe that $h_{m,n}$ is holomorphic everywhere outside of the four branch points of $\RS_{m,n}$ and at each of these points it can have at most a simple pole since $\Phi_{m,n}$ is bounded there. Clearly, $h_{m,n}$ has three simple zeros, one at each $\infty^{(k)}$, $k\in\{0,1,2\}$. If $\boldsymbol b_{\sigma,c}$ is not a pole, then the remaining three brach points of $\RS_{m,n}$ must be poles (the number of poles must be equal to the number of zeros) and there cannot be any more poles and/or zeros. In this case we put $\z_{m,n}=\boldsymbol  b_{\sigma,c}$, which verifies \eqref{divisor}. If $\boldsymbol b_{\sigma,c}$ is a pole of $h_{m,n}$, then
\begin{equation}
\label{htoinfty}
h_{m,n}^{(1)}(x) \to -(-1)^\iota\infty \quad \text{as} \quad x\to b_{\sigma,c}, \; \R\ni x\not\in\Delta_{\sigma,c}, \quad \iota:=\left\{
\begin{array}{ll}
0, & a_\sigma < b_\sigma, \\
1, & b_\sigma < a_\sigma,
\end{array}
\right.
\end{equation}
by the very definition of $h_{m,n}^{(1)}$ in \eqref{hmn} including the positivity of $\tau_{\sigma,c}$ and our labeling convention for the endpoints of $\Delta_{\sigma,c}$. On the other hand, it holds that
\[
h_{m,n}^{(1)}(z)=\frac{|\tau_{\mu,c}|-|\tau_{\sigma,c}|} z+\mathcal O\left(z^{-2}\right) \quad \text{as} \quad z\to\infty,
\]
again, by the very definition of $h_{m,n}$. As $|\tau_{\mu,c}|-|\tau_{\sigma,c}|=1-c>0$, we have that
\begin{equation}
\label{hpositive}
(-1)^\iota h_{m,n}^{(1)}(x) >0 \quad \text{as} \quad \R\ni x\to (-1)^\iota\infty.
\end{equation}
Therefore there indeed exists $z_{m,n}$ between $b_{\sigma,c}$ and $(-1)^\iota\infty$ such that $h_{m,n}^{(1)}(z_{m,n})=0$. Since $h_{m,n}$ has three more zeros, the rest of the branch points must be poles as claimed.

Assume that $\boldsymbol b_{\sigma,c}$ is a pole of $h_{m,n}$ (equivalently $\boldsymbol b_{\sigma,c}\neq\z_{m,n}$). Since $\RS_{m,n}$ has square root branching at $\boldsymbol b_{\sigma,c}$, it follows from \eqref{htoinfty} and the fact that the sum $h_{m,n}^{(0)}+h_{m,n}^{(1)}$ is holomorphic around $b_{\sigma,c}$ that
\[
\left\{
\begin{array}{l}
h_{m,n}^{(1)}(z) = d_{m,n}\big((-1)^\iota(z-b_{\sigma,c})\big)^{-1/2} + \mathcal O(1), \medskip \\
h_{m,n}^{(0)}(z) = -d_{m,n}\big((-1)^\iota(z-b_{\sigma,c})\big)^{-1/2} + \mathcal O(1),
\end{array}
\right. \quad \text{as} \quad z\to b_{\sigma,c}, \; z\not\in\Delta_{\sigma,c},
\]
where $(-1)^\iota d_{m,n}<0$, and the square root is principal. It further follows from the above asymptotics as well as from \eqref{logPhi} and \eqref{exph} that
\[
V^{\tau_{\mu,c}-2\tau_{\sigma,c}}(z) + 3\ell_{\sigma,c} = \re\left(\int_{b_{\sigma,c}}^z\left(h_{m,n}^{(0)} - h_{m,n}^{(1)}\right)(y)\mathrm dy\right)>0
\]
as $z\to b_{\sigma,c}$, $z\not\in\Delta_{\sigma,c}$. Hence, $b_{\sigma,c}\not\in\partial D_{\sigma,c}^-$. On the other hand, if $\boldsymbol b_{\sigma,c}$ is not a pole of $h_{m,n}$ (equivalently $\boldsymbol b_{\sigma,c}=\z_{m,n}$), then
\[
\left\{
\begin{array}{l}
h_{m,n}^{(0)}(z) = h_{m,n}(\boldsymbol b_{\sigma,c}) + e_{m,n}\big((-1)^\iota(z-b_{\sigma,c})\big)^{1/2} + \mathcal O\big(|z-b_{\sigma,c}|\big), \medskip \\
h_{m,n}^{(1)}(z) = h_{m,n}(\boldsymbol b_{\sigma,c}) - e_{m,n}\big((-1)^\iota(z-b_{\sigma,c})\big)^{1/2} + \mathcal O\big(|z-b_{\sigma,c}|\big),
\end{array}
\right.
\]
as $z\to b_{\sigma,c}$, $z\not\in\Delta_{\sigma,c}$, since $h_{m,n}^{(0)}+h_{m,n}^{(1)}$ is holomorphic around $b_{\sigma,c}$. Moreover, as $h_{m,n}^{(0)}$ satisfies \eqref{hpositive} and is monotone between $b_{\sigma,c}$ and $(-1)^\iota\infty$, it holds that $(-1)^\iota e_{m,n}<0$. Therefore,
\[
V^{\tau_{\mu,c}-2\tau_{\sigma,c}}(x) + 3\ell_{\sigma,c} = \int_{b_{\sigma,c}}^x\left(h_{m,n}^{(0)} - h_{m,n}^{(1)}\right)(y)\mathrm dy<0
\]
for $x\to b_{\sigma,c}$, $\R\ni x\not\in\Delta_{\sigma,c}$. In particular, $b_{\sigma,c}\in\partial D_{\sigma,c}^-$. This finishes the proof of the last claim of the proposition. Finally, similar analysis can be used to show that $a_{\sigma,c}\not\in\partial D_{\sigma,c}^-$, which, as noted at the end of Proposition~\ref{prop:h}, implies the equality $a_{\sigma,c}=a_\sigma$.

For the future use let us record several facts. Firstly, it holds that
\begin{equation}
\label{estPhi21}
\left|\Phi_{m,n}^{(2)}/\Phi_{m,n}^{(1)}\right| < 1 \quad \text{in} \quad \overline\CP\setminus\Delta_\mu.
\end{equation}
Indeed, \eqref{estPhi21} is equivalent to $V^{\tau_{\sigma,c}}-2V^{\tau_{\mu,c}} + 3\ell_{\mu,\sigma} >0$ by \eqref{logPhi}. The left-hand side of this inequality is superharmonic in $\CP\setminus\Delta_\mu$, is identically zero on $\Delta_\mu$ by \eqref{nikishin}, and approaches $+\infty$ as $z\to\infty$ since $|\tau_{\sigma,c}|=c<2=2|\tau_{\mu,c}|$. The desired inequality now follows from the minimum principle for superharmonic functions \cite[Theorem~2.3.1]{Ransford}.

Secondly, let $\{c_n\}$ be a sequence such that $c_n\to c>0$ as $n\to\infty$, $c_n\leq 1/2$. Then
\begin{equation}
\label{cws}
\tau_{\mu,c_n}\cws \tau_{\mu,c} \quad \text{and} \quad \tau_{\sigma,c_n} \cws \tau_{\sigma,c},
\end{equation}
where $\cws$ stands for the weak$^*$ convergence of measures. Indeed, besides \eqref{nikishin}, the pair $(\tau_{\mu,c},\tau_{\sigma,c})$ is characterized as the unique minimizers in $\mathcal M_c$ of the energy functional
\[
J(\tau_\mu,\tau_\sigma) := I(\tau_\mu,\tau_\mu) +  I(\tau_\sigma,\tau_\sigma) -  I(\tau_\mu,\tau_\sigma),
\]
where $I(\nu,\lambda):=-\int\log|z-w|\mathrm d\nu(z)\mathrm d\lambda(w)$, see \cite{Nik86,GRakhS97}. Let $\tau_\mu$ and $\tau_\sigma$ be weak$^*$ limit points of $\{\tau_{\mu,c_n}\}$ and $\{\tau_{\sigma,c_n}\}$, respectively. Clearly, $(\tau_\mu,\tau_\sigma)\in \mathcal M_c$. Then
\[
J(\tau_{\mu,c},\tau_{\sigma,c}) = \lim_{n\to\infty}J\left(\tau_{\mu,c},\frac{c_n}c\tau_{\sigma,c}\right) \geq \liminf_{n\to\infty} J(\tau_{\mu,c_n},\tau_{\sigma,c_n}) \geq J(\tau_\mu,\tau_\sigma),
\]
where the first inequality follows from the fact that $(\tau_{\mu,c_n},\tau_{\sigma,c_n})$ is the minimizer of the $J$-functional in $\mathcal M_{c_n}$ and the second inequality is the consequence of the principle of descent \cite[Theorem~I.6.8]{SaffTotik}, i.e, $\liminf I(\tau_{\nu,c_n},\tau_{\nu,c_n})\geq I(\tau_\nu,\tau_\nu)$, and the fact that $\Delta_\mu\cap\Delta_\sigma=\varnothing$ (in this case the kernel $\log|z-w|$ is continuous on $\Delta_\mu\times\Delta_\sigma$ and therefore $I(\tau_{\mu,c_n},\tau_{\sigma,c_n})\to I(\tau_\mu,\tau_\sigma)$ by weak$^*$ convergence of measures). As $(\tau_{\mu,c},\tau_{\sigma,c})$ is the unique minimizer of the $J$-functional in $\mathcal M_c$, \eqref{cws} follows.

Finally, let us point out that in the above setting $b_{\sigma,c_n}\to b_{\sigma,c}$, and
\begin{equation}
\label{convVs}
V^{\tau_{\nu,c_n}} \to V^{\tau_{\nu,c}} \quad \text{locally uniformly in} \quad \overline\CP\setminus\Delta_{\nu,c}, \quad \nu\in\{\mu,\sigma\},
\end{equation}
as $n\to\infty$, which is an immediate consequence of \eqref{cws}.

\subsection{Riemann-Hilbert Problem for Frobenius-Pad\'e Approximants}

Given such a pair of integers $(m,n)$, $n-1\leq m$, we are interested in finding a $3\times3$ matrix-valued function $\boldsymbol Y$ that solves the following Riemann-Hilbert Problem (\rhy):
\begin{itemize}
\label{rhy}
\item[(a)] ${\boldsymbol Y}$ is analytic in $\CP\setminus(\Delta_\mu\cup\Delta_\sigma)$ and
\[
\lim_{z\to\infty} {\boldsymbol Y}(z)\ \diag\left(z^{-n},z^{-m-1},z^{n+m+1}\right) = \boldsymbol I,
\]
where $\diag(\cdot,\cdot,\cdot)$ is the diagonal matrix and ${\boldsymbol I}$ is the identity matrix;
\item[(b)] ${\boldsymbol Y}$ has continuous traces on $\Delta_\mu^\circ\cup\Delta_\sigma^\circ$ that satisfy
\[
\boldsymbol Y_+ = \boldsymbol Y_- \mathsf T_\nu \left(\begin{matrix} 1 & \rho_\nu/w_\nu \\ 0 & 1 \end{matrix}\right) \quad \text{on} \quad \Delta_\nu^\circ, \quad \nu\in\{\mu,\sigma\},
\]
where transformations $\mathsf T_\mu$ and $\mathsf T_\sigma$ act on $2\times2$ matrices in the following fashion:
\[
\mathsf T_\mu\boldsymbol A := \left(\begin{matrix} 1 & 0 & 0 \\ 0 & [\boldsymbol A]_{11} & [\boldsymbol A]_{12} \\ 0 &  [\boldsymbol A]_{21} & [\boldsymbol A]_{22} \end{matrix}\right) \quad \text{and} \quad \mathsf T_\sigma\boldsymbol A := \left(\begin{matrix}   [\boldsymbol A]_{11} & [\boldsymbol A]_{12} & 0 \\ [\boldsymbol A]_{21} & [\boldsymbol A]_{22} & 0 \\ 0 & 0 & 1 \end{matrix}\right);
\]
\item[(c)] the entries of $\boldsymbol Y$ are bounded except for the second column around the endpoints of $\Delta_\sigma$ and the third column around the endpoint of $\Delta_\mu$ where they behave as $\boldsymbol{\mathcal O}(|z-e|^{-1/2})$ with $e$ being the corresponding endpoint.
\end{itemize}

To see how \hyperref[rhy]{\rhy} is connected to Frobenius-Pad\'e approximants, observe that the linear form $R_{m,n}$ is a holomorphic function in $\CP\setminus\Delta_\sigma$ with a pole of degree at most $m$ at infinity. Moreover, it follows from Plemelj-Sokhotski formulae \cite[Section~I.4.2]{Gakhov} and \eqref{measures} that
\begin{equation}
\label{R1-jump}
R_{m,n}^+ - R_{m,n}^- = Q_{m,n}\rho_\sigma/w_\sigma^+ \quad \text{on} \quad \Delta_\sigma^\circ.
\end{equation}
It is also known from the theory of boundary behavior of Cauchy integrals \cite[Section~I.8]{Gakhov} that $R_{m,n}(z)\sim |z-e|^{-1/2}$ as $z\to e\in\{a_\sigma,b_\sigma\}$. As mentioned before, condition \eqref{Frobenius} implies that $R_{m,n}$ is orthogonal to all polynomials of degree at most $m+n$ with respect to $\mu$, i.e.,
\begin{equation}
\label{Fs-ortho-1}
\int x^i R_{m,n}(x)\mathrm d\mu(x) =0, \quad i\in\{0,\ldots,m+n\}.
\end{equation}
Orthogonality relations \eqref{Fs-ortho-1} imply that the Cauchy transform of $R_{m,n}$ vanishes at infinity with order at least $m+n+2$.  That is, the function
\[
C_{m,n}(z) := \int\frac{R_{m,n}(x)}{x-z}\mathrm d\mu(x), \quad z\in\overline\CP\setminus\Delta_\mu,
\]
is a holomorphic function in $\overline\CP\setminus\Delta_\mu$, has a zero of order at least $m+n+2$ at infinity, and satisfies
\begin{equation}
\label{R2-jump}
C_{m,n}^+ - C_{m,n}^- = R_{m,n}\rho_\mu/w_\mu^+ \quad \text{on} \quad \Delta_\mu^\circ.
\end{equation}
As in the case of $R_{m,n}$, we can conclude that $C_{m,n}(z)\sim |z-e|^{-1/2}$ as $z\to e\in\{a_\mu,b_\mu\}$.

\begin{lemma}
\label{lem:uniqueness}
Let $n-1\leq m$. If $(m,n)$-th Frobenius-Pad\'e approximant is unique and $\deg(Q_{m,n})=n$, then $R_{m+1,n-1}(z)\sim z^{m+1}$ and $C_{m,n-1}(z)\sim z^{-(n+m+1)}$ as $z\to\infty$ for any $(m+1,n-1)$-st and $(m,n-1)$-st approximants, respectively.
\end{lemma}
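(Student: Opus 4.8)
The plan is to argue that the stated degree conditions for the nearby approximants are forced precisely by the assumed uniqueness and maximality of degree of the central $(m,n)$-th approximant, via the linear-algebraic dimension count underlying the Frobenius--Pad\'e system. First I would recall that for the $(m,n-1)$-st approximant, the denominator $Q_{m,n-1}$ is found from a homogeneous linear system with $n-1$ equations (indexed by $i\in\{m+1,\dots,m+n-1\}$) and $n$ unknowns (coefficients of $p_0,\dots,p_{n-1}$), so the solution space has dimension at least $1$; I want to show it has dimension \emph{exactly} $1$ and that the generic solution has $\deg(Q_{m,n-1})=n-1$. The key observation is that if $Q$ solved the $(m,n-1)$-system with $\deg Q\le n-1$, then $Q$ would also satisfy the first $n-1$ of the $n$ orthogonality conditions \eqref{Frobenius} for the $(m,n)$-system; conversely any $Q_{m,n}$ from the $(m,n)$-system with $\deg(Q_{m,n})=n$ is, by hypothesis, the \emph{unique} (up to scalar) solution there. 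The hard part of the bookkeeping is to rule out a lower-degree solution $Q$ of the $(m,n-1)$-system: if such $Q$ existed with $\deg Q\le n-2$, one could multiply by a linear factor to produce a second, linearly independent solution of the $(m,n)$-system of degree $\le n-1<n$, contradicting uniqueness of $Q_{m,n}$ together with $\deg(Q_{m,n})=n$. Hence $\deg(Q_{m,n-1})=n-1$ and the $(m,n-1)$-st approximant is itself unique.

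Next I would translate these facts into the asymptotic orders at infinity claimed in the statement. For $R_{m+1,n-1}=Q_{m+1,n-1}\widehat\sigma-P_{m+1,n-1}$: by the argument of the previous paragraph applied with the pair $(m+1,n-1)$ in place of $(m,n-1)$ (the relevant linear system now has $n-1$ equations indexed by $i\in\{m+2,\dots,m+n\}$ and $n$ unknowns), the denominator $Q_{m+1,n-1}$ has degree exactly $n-1$, so $Q_{m+1,n-1}\widehat\sigma$ behaves like $z^{n-2}\cdot z^{-1}=z^{n-3}$ at infinity while $P_{m+1,n-1}$ has degree at most $m+1$; since $n-1\le m$ we have $n-3<m+1$, so $R_{m+1,n-1}(z)\sim z^{m+1}$, i.e. the pole at infinity attains the maximal allowed order $m+1$ (the point being that the top coefficient $b_{m+1}$ of $P_{m+1,n-1}$ is nonzero, which follows because otherwise $R_{m+1,n-1}$ would have a pole of order $\le m$, giving extra vanishing of its Cauchy transform and hence extra orthogonality, again contradicting that $\deg(Q_{m+1,n-1})$ cannot be lowered). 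For $C_{m,n-1}$, recall from the discussion preceding \eqref{R2-jump} that $C_{m,n-1}(z)=\int R_{m,n-1}(x)(x-z)^{-1}\mathrm d\mu(x)$ vanishes at infinity with order at least $(m)+(n-1)+2=n+m+1$; I must show the order is \emph{exactly} $n+m+1$, equivalently that $\int x^{m+n-1}R_{m,n-1}(x)\mathrm d\mu(x)\neq 0$. If this integral vanished, then $R_{m,n-1}$ would be orthogonal to polynomials of degree up to $m+n-1$, which is one more orthogonality condition than \eqref{Frobenius} provides for the $(m,n-1)$-index; running the orthogonality reduction \eqref{ortho} with this extra condition would force $Q_{m,n-1}$ to satisfy $n$ orthogonality relations against $\mathrm d\sigma/V$, hence (by the same sign/degree argument that gives uniqueness in the $(m,n)$ case, or simply by a second-solution construction as above) contradict $\deg(Q_{m,n-1})=n-1$.

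The main obstacle I anticipate is the last step: making rigorous the claim that an ``extra'' orthogonality relation for $R_{m,n-1}$ forces a contradiction with $\deg(Q_{m,n-1})=n-1$. Unlike the positive-measure heuristic of Section~\ref{sec:intro}, here $\sigma$ is complex-valued, so one cannot invoke the constant-sign argument after \eqref{ortho} directly; instead I would phrase everything purely in terms of the two linear systems displayed after \eqref{Frobenius}, tracking exactly which index rows are involved and using the multiply-by-a-linear-factor trick to manufacture a second solution in a system whose uniqueness is guaranteed by hypothesis. Concretely: extra vanishing of $C_{m,n-1}$ at infinity is equivalent to $R_{m,n-1}\perp\{1,x,\dots,x^{m+n-1}\}$ in $L^2(\mu)$, which is equivalent (by the chain of equivalences tying \eqref{Frobenius} to \eqref{Fs-ortho-1}) to the pair $(P_{m,n-1},Q_{m,n-1})$ satisfying the \emph{$(m,n)$-th} Frobenius conditions; but then $Q_{m,n-1}$, of degree $\le n-1<n$, is a denominator of an $(m,n)$-th approximant, forcing either non-uniqueness of that approximant or $\deg(Q_{m,n})\le n-1$, both excluded by hypothesis. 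The symmetric statement for $R_{m+1,n-1}$ is handled the same way with the roles of the two systems shifted by one in $m$. Once this equivalence-chasing is set up cleanly, the lemma follows without further computation.
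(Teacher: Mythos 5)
Your closing paragraph is essentially the paper's proof: if $C_{m,n-1}$ (resp.\ $R_{m+1,n-1}$) vanishes (resp.\ grows) at infinity to non-extremal order, then the corresponding pair $(P,Q)$ already satisfies the $(m,n)$-th Frobenius conditions with $\deg Q\le n-1$, contradicting uniqueness together with $\deg(Q_{m,n})=n$; this is exactly the re-indexing argument the authors use. The rest of your write-up, however, contains detours that are both unnecessary and partly wrong. First, the lemma never needs $\deg(Q_{m,n-1})=n-1$ or $\deg(Q_{m+1,n-1})=n-1$: since $\deg Q_{m+1,n-1}\le n-1\le m$, the term $Q_{m+1,n-1}\widehat\sigma=\mathcal O(z^{n-2})$ (not $z^{n-3}$) is always of lower order than $z^{m+1}$, so $R_{m+1,n-1}\sim z^{m+1}$ is governed solely by whether $\deg P_{m+1,n-1}=m+1$. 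Second, the ``multiply by a linear factor'' trick does not transfer from classical Pad\'e to the Frobenius setting: replacing $R=Qf-P$ by $(x-a)R$ \emph{reduces} the number of orthogonality relations against $\mu$ from $i\le m+n-1$ to $i\le m+n-2$ and forces the numerator $(x-a)P$ to have degree $m+1>m$, so it does not manufacture a second solution of the $(m,n)$-system; this step would fail if it were load-bearing. Third, the parenthetical claim that $\deg(P_{m+1,n-1})\le m$ yields ``extra vanishing of its Cauchy transform'' conflates the order of the pole of $R$ at infinity with the orthogonality relations that actually control $C_{m+1,n-1}$; the correct contradiction is the one in your last paragraph (the pair becomes an $(m,n)$-approximant with too small a denominator), modulo the off-by-one there ($R_{m,n-1}\perp x^i$ for $i\le m+n$, not $m+n-1$, is what extra vanishing gives and what the $(m,n)$-conditions require).
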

\begin{proof}
Assume to the contrary that there is $(m,n-1)$-st approximant such that $C_{m,n-1}(z)\sim z^{-(n+m+j+1)}$ as $z\to\infty$ for some $j>0$. It can be readily verified that in this case the corresponding linear form $R_{m,n-1}$ is orthogonal to all polynomials of degree at most $m+n+j$.  Then we can conclude from \eqref{Frobenius} that this $(m,n-1)$-st approximant is also $(m+j_1,n-1+j_2)$-th approximant for any choice of $j_1,j_2\geq0$, $j_1+j_2\leq j$. By taking $j_1=1$ and $j_2=0$, we see that there exists $(m+1,n-1)$-st approximant for which $R_{m+1,n-1}(z)\sim z^{m+1-i}$ for some $i>0$ (recall that $n-1\leq m$). This implies that $\deg(P_{m+1,n-1}) = m+1-i\leq m$, and respectively, this Frobenius-Pad\'e approximant also corresponds to the index $(m,n)$ and its denominator has degree at most $n-1$.
\end{proof}

Assuming $(m,n)$-th approximant is the unique and $\deg(Q_{m,n})=n$, define
\begin{equation}
\label{Y}
\boldsymbol Y_{m,n} := \boldsymbol C_{m,n}\left(\begin{matrix}
Q_{m,n} & R_{m,n} & C_{m,n} \medskip \\
Q_{m+1,n-1} & R_{m+1,n-1} & C_{m+1,n-1} \medskip \\
Q_{m,n-1} & R_{m,n-1} & C_{m,n-1}
\end{matrix}\right),
\end{equation}
where $\boldsymbol C_{m,n}$ is a diagonal matrix of constants chosen so that $\boldsymbol Y_{m,n}$ satisfies the normalization at infinity from \hyperref[rhy]{\rhy}(a). The choice of $\boldsymbol C_{m,n}$ is always possible due to Lemma~\ref{lem:uniqueness}. Then the following lemma holds.

\begin{lemma}
\label{lem:rhy}
Let $n-1\leq m$. If \hyperref[rhy]{\rhy} is solvable, then $(m,n)$-th, $(m,n-1)$-st, and $(m+1,n-1)$-st Frobenius-Pad\'e approximants are unique, $\deg(Q_{m,n})=n$, and $\boldsymbol Y=\boldsymbol Y_{m,n}$.
\end{lemma}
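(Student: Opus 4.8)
The plan is to show that solvability of \hyperref[rhy]{\rhy} forces enough structure on the three approximants in question to make them unique with the claimed degrees, and then to identify the solution with $\boldsymbol Y_{m,n}$ by a standard Liouville-type argument. First I would unpack what a solution $\boldsymbol Y$ of \hyperref[rhy]{\rhy} looks like entry by entry. The jump condition (b) on $\Delta_\sigma^\circ$ only couples columns $1$ and $2$ and leaves column $3$ continuous there, while on $\Delta_\mu^\circ$ it only couples columns $2$ and $3$ and leaves column $1$ continuous; the endpoint condition (c) gives the admissible mild blow-up. Reading the first column: its entries are entire except on $\Delta_\mu$ where they are continuous, hence each is a polynomial, and the growth at infinity prescribed by (a) pins the degrees. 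Reading the second column with the jump $[\boldsymbol Y]_{\cdot 2}^+ = [\boldsymbol Y]_{\cdot 1}^+\rho_\sigma/w_\sigma^+ + [\boldsymbol Y]_{\cdot 2}^-$ on $\Delta_\sigma$, one sees (using \eqref{R1-jump}) that $[\boldsymbol Y]_{j2}$ is the linear form built from the polynomial $[\boldsymbol Y]_{j1}$ and $\widehat\sigma$, up to an additive polynomial; the normalization at infinity together with (c) forces that additive polynomial to vanish and forces the required orthogonality. Reading the third column similarly, via \eqref{R2-jump}, identifies it as the $\mu$-Cauchy transform of the second column, again with the decay at infinity fixing constants.

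Thus from any solution $\boldsymbol Y$ I would extract, from its three rows, triples $(Q,R,C)$, $(\widetilde Q,\widetilde R,\widetilde C)$, $(\overline Q,\overline R,\overline C)$ that are precisely of the form attached to $(m,n)$-th, $(m+1,n-1)$-st, and $(m,n-1)$-st Frobenius-Pad\'e approximants (the degree count in (a) being exactly $n$, $m+1$, $n$ for the $Q$-entries and $-n$, $-m-1$, $-(n+m+1)$ for the $C$-entries at infinity — which is where Lemma~\ref{lem:uniqueness}'s normalization is matched). In particular the first row yields an $(m,n)$-th approximant with $\deg(Q_{m,n})=n$. Uniqueness of that approximant I would then derive from uniqueness of the solution of \hyperref[rhy]{\rhy}: if two $(m,n)$-th approximants existed with $\deg(Q_{m,n})=n$, one could build from each a solution of \hyperref[rhy]{\rhy} (this is essentially the content of \eqref{Y} and the choice of $\boldsymbol C_{m,n}$), and their difference would violate the standard determinant argument below. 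Uniqueness of the $(m,n-1)$-st and $(m+1,n-1)$-st approximants follows the same way once one knows the relevant rows of $\boldsymbol Y$ are forced, since Lemma~\ref{lem:uniqueness} shows the off-diagonal indices are controlled by the $(m,n)$-th index being nondegenerate.

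The Liouville step: $\det\boldsymbol Y$ has no jump across $\Delta_\mu^\circ\cup\Delta_\sigma^\circ$ (each jump matrix in (b) is unipotent, determinant $1$), is bounded near the endpoints because the $|z-e|^{-1/2}$ singularities occur in a single column at a time and are integrable — more carefully, one checks $\det\boldsymbol Y$ stays bounded there — so $\det\boldsymbol Y$ extends to an entire function, and by (a) it tends to $1$ at infinity, hence $\det\boldsymbol Y\equiv 1$; in particular $\boldsymbol Y$ is everywhere invertible. Given two solutions $\boldsymbol Y,\boldsymbol Y_{m,n}$, the product $\boldsymbol Y\boldsymbol Y_{m,n}^{-1}$ has no jumps (same jump matrices cancel), has at worst isolated singularities at the four endpoints which are removable since both factors have the half-integer bound in complementary columns, and tends to $\boldsymbol I$ at infinity; by Liouville it is $\boldsymbol I$, so $\boldsymbol Y=\boldsymbol Y_{m,n}$. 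I expect the main obstacle to be the bookkeeping at the endpoints — verifying that $\det\boldsymbol Y$ and $\boldsymbol Y\boldsymbol Y_{m,n}^{-1}$ are genuinely bounded (not merely $o(|z-e|^{-1})$) given that condition (c) allows $|z-e|^{-1/2}$ growth in one column — and, relatedly, making precise that solvability of \hyperref[rhy]{\rhy} already \emph{forces} $\deg Q_{m,n}=n$ and the non-degeneracy needed to invoke Lemma~\ref{lem:uniqueness}, rather than assuming it; this is handled by noting that the normalization in (a) leaves no room for $[\boldsymbol Y]_{11}$ to have degree below $n$.
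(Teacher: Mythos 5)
Your overall strategy --- read off approximant triples from the rows of $\boldsymbol Y$, prove uniqueness of the solution of RHP-$\boldsymbol Y$ by the determinant/Liouville argument, and then transfer that uniqueness to the approximants --- is exactly the paper's, and both the row-by-row identification and the Liouville step are carried out correctly. The genuine gap is in how you transfer uniqueness to the approximants. You propose: if two $(m,n)$-th approximants existed with $\deg(Q_{m,n})=n$, ``one could build from each a solution of RHP-$\boldsymbol Y$'' via \eqref{Y}. But assembling the full matrix \eqref{Y} from a given $(m,n)$-th approximant requires supplying second and third rows built from $(m+1,n-1)$-st and $(m,n-1)$-st approximants normalized so that $R_{m+1,n-1}\sim z^{m+1}$ and $C_{m,n-1}\sim z^{-(n+m+1)}$; by Lemma~\ref{lem:uniqueness} those normalizations are guaranteed only once one already knows that the $(m,n)$-th approximant is unique with $\deg(Q_{m,n})=n$ --- which is precisely what is being proved. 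The paper sidesteps this circularity by never building a second solution from scratch: given a competing $(m,n)$-th approximant, it replaces only the \emph{first} row of the already-known solution $\boldsymbol Y$ (keeping the second and third rows, which are already certified by the first part of the argument), checks that the resulting matrix still solves RHP-$\boldsymbol Y$, and concludes from uniqueness of the solution that the new row coincides with the old one; the same one-row-at-a-time swap then handles the $(m+1,n-1)$-st and $(m,n-1)$-st approximants.

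A second, smaller omission: you only treat competitors whose denominator has degree exactly $n$, but uniqueness requires ruling out solutions of \eqref{Frobenius} with $\deg Q<n$ as well; the normalization in (a) constrains $[\boldsymbol Y]_{11}$, not an arbitrary competitor $Q$. The paper handles this by replacing such a $Q$ with $[\boldsymbol Y]_{11}-Q$, which has degree $n$ and still satisfies \eqref{Frobenius} by linearity, and then running the row-replacement argument. Finally, two slips in your bookkeeping: the additive polynomial in $[\boldsymbol Y]_{12}=[\boldsymbol Y]_{11}\widehat\sigma-P$ does not vanish --- it is the numerator $P$, and what (a) forces is only $\deg(P)\leq m$ (using $n-1\le m$); and the exact leading behaviors $z^{n}$, $z^{m+1}$, $z^{-(n+m+1)}$ imposed by (a) attach to the \emph{diagonal} entries $Q_{m,n}$, $R_{m+1,n-1}$, $C_{m,n-1}$, not to the three $Q$-entries or the three $C$-entries as written.
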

\begin{proof}
Assume that \hyperref[rhy]{\rhy} is solvable and $\boldsymbol Y$ is a solution. We consider only the first row as the other ones can be analyzed similarly. It follows from \hyperref[rhy]{\rhy}(a,b) that $[\boldsymbol Y]_{11}$ must be polynomial of degree $n$. Further,  all three properties \hyperref[rhy]{\rhy}(a,b,c) imply that $[\boldsymbol Y]_{12} = [\boldsymbol Y]_{11}\widehat\sigma - P$ for some polynomial $P$, $\deg(P)\leq m$. Analogously, we see that $[\boldsymbol Y]_{13}$ must be a  Cauchy transform of $[\boldsymbol Y]_{12}\rho_\mu/w_\mu^+$. The vanishing of $[\boldsymbol Y]_{13}$ at infinity with order at least $m+n+2$ implies that $[\boldsymbol Y]_{12}$ is orthogonal to $x^i$, $i\in\{0,\dots,m+n\}$, with respect to $\mu$. Therefore, $c_i([\boldsymbol Y]_{11}\widehat\sigma - P)=0$ for such $i$, and, by definition, $P/[\boldsymbol Y]_{11}$ is an $(m,n)$-th Frobenius-Pad\'e approximant.

To show uniqueness of the approximants, observe first that the solution of \hyperref[rhy]{\rhy} is unique. Indeed, Let $\boldsymbol Y_1$ and $\boldsymbol Y_2$ be solutions. As the determinant of the jump matrix in \hyperref[rhy]{\rhy}(b) is $1$ and $\det(\boldsymbol Y_1)$ can have at most square root singularities at the endpoints of $\Delta_\mu$ and $\Delta_\sigma$ by \hyperref[rhy]{\rhy}(c), $\det(\boldsymbol Y_1)$ is an entire function such that $\det(\boldsymbol Y_1)(\infty)=1$. Hence, $\det(\boldsymbol Y_1)\equiv1$ and therefore $\boldsymbol Y_1$ is invertible. Then $\boldsymbol Y_2\boldsymbol Y_1^{-1}$ is an entire matrix-valued function that is equal to $\boldsymbol I$ at infinity. Thus, $\boldsymbol Y_2=\boldsymbol Y_1$.

Second, observe that if $(m,n)$-th approximant is unique and $\deg(Q_{m,n})=n$, then $\boldsymbol Y_{m,n}$ solves \hyperref[rhy]{\rhy}. Indeed, the fact that $\boldsymbol Y_{m,n}$ satisfies \hyperref[rhy]{\rhy}(a,c) easily follows from the analyticity properties and the behavior at infinity of $Q_{m,n}$, $R_{m,n}$, and $C_{m,n}$, as well as from the choice of $\boldsymbol C_{m,n}$. \hyperref[rhy]{\rhy}(b) is an immediate consequence of \eqref{R1-jump} and \eqref{R2-jump}.

Now, let $\boldsymbol Y$ be the solution. Assume $Q_{m,n}$, $R_{m,n}$, and $C_{m,n}$ correspond to another $(m,n)$-th approximant. Without loss of generality we can assume that $\deg(Q_{m,n})=n$ (otherwise we should take $[\boldsymbol Y]_{11}-Q_{m,n}$ instead of $Q_{m,n}$). Construct matrix $\boldsymbol Y_1$ by  replacing the first row of $\boldsymbol Y$ with $\big(\begin{matrix} Q_{m,n} & R_{m,n} & C_{m,n} \end{matrix}\big)$. From the first paragraph we know that the second and third rows of $\boldsymbol Y_1$ correspond to $(m+1,n-1)$-st and $(m,n-1)$-st approximants and therefore we deduce from the third paragraph that $\boldsymbol Y_1$ is a solution of \hyperref[rhy]{\rhy}. By uniqueness, we get that $\boldsymbol Y_1=\boldsymbol Y$ and therefore $(m,n)$-th approximant is unique. Thus, we know from Lemma~\ref{lem:uniqueness} that any $(m+1,n-1)$-st and $(m,n-1)$-st must satisfy its conclusions. Hence, if they were not unique, we could replace the second and third rows of $\boldsymbol Y$ by the functions coming from other approximants and obtain a solution of \hyperref[rhy]{\rhy} different form $\boldsymbol Y$, which is impossible.
\end{proof}

\subsection{Non-Linear Steepest Descent Analysis in the Case $\Delta_{\sigma,c}\cap\partial D_{\sigma,c}^-=\varnothing$}
\label{ssec:no-local}

Recall that in the considered case $\Delta_{\sigma,c}=\Delta_\sigma$, see Proposition~\ref{prop:nikishin}. Moreover, it follows from \eqref{cws} and Proposition~\ref{prop:h} that $\Delta_{\sigma,\frac n{n+m}}=\Delta_\sigma$ for all $n$ large enough. In particular, we have that $\RS_{m,n}=\RS_c=:\RS$ for all such $n$ and we consider only these indices from now on.

Let $\Gamma_\mu$ and $\Gamma_\sigma$ be positively oriented Jordan curves lying exterior to each other and containing $\Delta_\mu$ and $\Delta_\sigma$ in the respective interiors. We denote by $\Omega_\nu$ the domain delimited by $\Gamma_\nu\cup\Delta_\nu$, $\nu\in\{\mu,\sigma\}$. We assume that $\rho_\nu$ extends holomorphically across $\Gamma_\nu$, $\nu\in\{\mu,\sigma\}$, and that $\Gamma_\sigma\subset D_{\sigma,c}^+$. Observe that in the considered case $\partial D_{\sigma,\frac n{n+m}}^+$ approaches $D_{\sigma,c}^+$ by \eqref{convVs} and therefore $\Gamma_\sigma\subset D_{\sigma,\frac n{n+m}}^+$ is uniformly separated from $\partial D_{\sigma,\frac n{n+m}}^+$. Define
\begin{equation}
\label{X}
\boldsymbol X = \boldsymbol Y \left\{
\begin{array}{ll}
\mathsf T_\nu\left( \begin{matrix} 1 & 0  \\ -w_\nu/\rho_\nu & 1\end{matrix} \right), & \text{in} \quad \Omega_\nu, \quad \nu\in\{\mu,\sigma\}, \medskip \\
\boldsymbol I, & \text{otherwise}.
\end{array}
\right.
\end{equation}
It is easy to verify that $\boldsymbol X$ solves the following Riemann-Hilbert problem (\rhx):
\begin{itemize}
\label{rhx}
\item[(a)] $\boldsymbol X$ is analytic in $\CP\setminus(\Delta_\mu\cup\Delta_\sigma\cup\Gamma_\mu\cup\Gamma_\sigma)$ and
\[
\lim_{z\to\infty} {\boldsymbol X}(z)\ \diag\left(z^{-n},z^{-m-1},z^{n+m+1}\right) = \boldsymbol I;
\]
\item[(b)] $\boldsymbol X$ has continuous traces on $\Delta_\mu^\circ\cup\Delta_\sigma^\circ\cup\Gamma_\mu\cup\Gamma_\sigma$ that satisfy
\[
\boldsymbol X_+ = \boldsymbol X_- \left\{
\begin{array}{rl}
\mathsf T_\nu \left(\begin{matrix} 0 & \rho_\nu/w_\nu^+ \\ -w_\nu^+/\rho_\nu & 0 \end{matrix}\right) & \text{on} \quad \Delta_\nu^\circ, \medskip \\
\mathsf T_\nu \left(\begin{matrix} 1 & 0 \\ w_\nu/\rho_\nu & 1 \end{matrix}\right) & \text{on} \quad \Gamma_\nu,
\end{array}
\right. \quad \nu\in\{\sigma,\mu\};
\]
\item[(c)] $\boldsymbol X$ satisfies \hyperref[rhy]{\rhy}(c).
\end{itemize}

Then the following lemma can be easily checked.

\begin{lemma}
\label{lem:rhs}
\hyperref[rhx]{\rhx} is solvable if and only if \hyperref[rhy]{\rhy} is solvable. When solutions of \hyperref[rhx]{\rhx} and \hyperref[rhy]{\rhy} exist, they are unique and connected by \eqref{X}.
\end{lemma}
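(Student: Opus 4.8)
The plan is entirely routine. Relation \eqref{X} is nothing but multiplication of $\boldsymbol Y$ on the right by a matrix that is holomorphic and invertible in each of the bounded regions $\Omega_\mu,\Omega_\sigma$ — its nontrivial $2\times2$ block $\left(\begin{smallmatrix}1&0\\-w_\nu/\rho_\nu&1\end{smallmatrix}\right)$ is unipotent, $w_\nu$ is holomorphic there, and $\rho_\nu$ is analytic and non-vanishing on $\overline{\Omega_\nu}$ — and equals $\boldsymbol I$ everywhere else. I would simply check that this explicit, globally invertible change of unknown carries solutions of \hyperref[rhy]{\rhy} to solutions of \hyperref[rhx]{\rhx} and back, so that \eqref{X} is a bijection between the two solution sets. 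Solvability of the two problems is then equivalent, and, since \hyperref[rhy]{\rhy} has at most one solution — established in the course of the proof of Lemma~\ref{lem:rhy} via $\det\boldsymbol Y\equiv1$ — so does \hyperref[rhx]{\rhx}, the two being connected by \eqref{X}.

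To pass from \hyperref[rhy]{\rhy} to \hyperref[rhx]{\rhx} I would take a solution $\boldsymbol Y$ and set $\boldsymbol X$ as in \eqref{X}. Since the inserted factors are holomorphic in $\Omega_\nu$ and equal $\boldsymbol I$ near infinity, \hyperref[rhx]{\rhx}(a) holds (analyticity off $\Delta_\mu\cup\Delta_\sigma\cup\Gamma_\mu\cup\Gamma_\sigma$ and the prescribed behaviour at infinity). For the jumps \hyperref[rhx]{\rhx}(b): across $\Gamma_\nu$, where $\boldsymbol Y$ is analytic since $\Gamma_\nu$ meets neither $\Delta_\mu$ nor $\Delta_\sigma$, the jump of $\boldsymbol X$ is exactly the inserted factor, whose sign in \eqref{X} has been chosen so that it agrees with the triangular matrix displayed in \hyperref[rhx]{\rhx}(b); across $\Delta_\nu^\circ$ the jump of $\boldsymbol X$ equals the inverse inserted factor from one bank, times the jump $\mathsf T_\nu\!\left(\begin{smallmatrix}1&\rho_\nu/w_\nu^+\\0&1\end{smallmatrix}\right)$ of $\boldsymbol Y$, times the inserted factor from the other bank, and, using $w_\nu^+=-w_\nu^-$ on $\Delta_\nu^\circ$, this telescopes — via the classical lens-opening identity $\left(\begin{smallmatrix}1&0\\-w^+/\rho&1\end{smallmatrix}\right)\!\left(\begin{smallmatrix}1&\rho/w^+\\0&1\end{smallmatrix}\right)\!\left(\begin{smallmatrix}1&0\\-w^+/\rho&1\end{smallmatrix}\right)=\left(\begin{smallmatrix}0&\rho/w^+\\-w^+/\rho&0\end{smallmatrix}\right)$ embedded by $\mathsf T_\nu$ — to the anti-diagonal jump in \hyperref[rhx]{\rhx}(b). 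For \hyperref[rhx]{\rhx}(c): inside $\Omega_\nu$ the only column of $\boldsymbol X$ differing from that of $\boldsymbol Y$ is obtained by subtracting $(w_\nu/\rho_\nu)$ times the neighbouring column, and since $w_\nu$ vanishes like a square root at each endpoint $e$ of $\Delta_\nu$ while that neighbouring column is at worst $\boldsymbol{\mathcal O}(|z-e|^{-1/2})$, the product stays bounded; the remaining columns, and the behaviour along $\Gamma_\nu$ (which avoids the endpoints), are untouched, so \hyperref[rhy]{\rhy}(c) transfers to $\boldsymbol X$.

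Conversely, given a solution $\boldsymbol X$ of \hyperref[rhx]{\rhx}, I would define $\boldsymbol Y$ by inverting \eqref{X}: multiply $\boldsymbol X$ on the right by the inverse of the inserted factor inside $\Omega_\nu$ and leave it unchanged elsewhere. The jump of $\boldsymbol Y$ across $\Gamma_\nu$ cancels by the very choice of that factor, the computation on $\Delta_\nu^\circ$ is the preceding factorization read backwards, the endpoint analysis is identical, and the normalization at infinity is unaffected; hence $\boldsymbol Y$ solves \hyperref[rhy]{\rhy}. This yields the claimed bijection, the equivalence of solvability, and — together with the uniqueness for \hyperref[rhy]{\rhy} recalled above — the uniqueness for \hyperref[rhx]{\rhx} and the link \eqref{X}. (One could instead re-run the determinant argument of Lemma~\ref{lem:rhy} directly for $\boldsymbol X$: every jump matrix in \hyperref[rhx]{\rhx}(b) has determinant $1$, the endpoint singularities of $\det\boldsymbol X$ are at most square-root, and $\det\boldsymbol X(\infty)=1$, so $\det\boldsymbol X\equiv1$ and any two solutions differ by an entire matrix equal to $\boldsymbol I$ at infinity.)

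I do not anticipate any genuine obstacle. The only two points that warrant care are the algebraic telescoping on $\Delta_\nu^\circ$, which relies on $w_\nu^+=-w_\nu^-$, and the remark that the square-root vanishing of $w_\nu$ at the endpoints exactly absorbs the admissible square-root blow-up of the adjacent column of $\boldsymbol Y$, so that \eqref{X} neither creates nor destroys the local behaviour prescribed in \hyperref[rhy]{\rhy}(c).
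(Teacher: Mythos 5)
Your proposal is correct and is exactly the routine verification the paper itself leaves to the reader (the paper simply asserts the lemma ``can be easily checked''): the unipotent factors in \eqref{X} are holomorphic and invertible in $\Omega_\nu$, the lens-opening identity together with $w_\nu^+=-w_\nu^-$ produces the anti-diagonal jump on $\Delta_\nu^\circ$, the square-root vanishing of $w_\nu$ absorbs the admissible endpoint blow-up, and uniqueness follows from the determinant argument already recorded in the proof of Lemma~\ref{lem:rhy}. No gaps; this matches the intended argument.
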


As typical in the steepest descent analysis of Riemann-Hilbert problems, we ignore the jump of $\boldsymbol X$ on $\Gamma_\mu\cup\Gamma_\sigma$ and look for the following approximation to $\boldsymbol X$ (\rhn):

\begin{itemize}
\label{rhn}
\item[(a)] $\boldsymbol N$ is analytic in $\CP\setminus(\Delta_\mu\cup\Delta_\sigma)$ and
\[
\lim_{z\to\infty} {\boldsymbol N}(z)\ \diag\left(z^{-n},z^{-m-1},z^{n+m+1}\right) = \boldsymbol I;
\]
\item[(b)] $\boldsymbol N$ has continuous traces on $\Delta_\mu^\circ\cup\Delta_\sigma^\circ$ that satisfy
\[
\boldsymbol N_+ = \boldsymbol N_- \mathsf T_\nu \left(\begin{matrix} 0 & \rho_\nu/w_\nu^+ \\ -w_\nu^+/\rho_\nu & 0 \end{matrix}\right) \quad \text{on} \quad \Delta_\nu^\circ, \quad \nu\in\{\mu,\sigma\};
\]
\item[(c)] $\boldsymbol N$ satisfies \hyperref[rhy]{\rhy}(c).
\end{itemize}

Let $\Phi_{m,n}$ be as defined before \eqref{normalization}, which are rational functions on the same surface $\RS$. Denote by $\Upsilon_k$, $k\in\{0,1,2\}$, a rational functions on $\RS$ with the divisor $\infty^{(0)}-\infty^{(k)}$, normalized as in \eqref{normalization}. Clearly, $\Upsilon_0\equiv1$, $\Phi_{m+1,n}\Upsilon_1=\Phi_{m+2,n-1}$ and $\Phi_{m+1,n}\Upsilon_2=\Phi_{m+1,n-1}$. Further, let $S:=S_c$ be the function granted by Proposition~\ref{prop:Szego}, again with respect to $\RS$. Define the constants $\gamma_{m+1,n}^{(k)}$ by
\begin{equation}
\label{constants}
\lim_{z\to\infty}z^{i(k)}\gamma_{m+1,n}^{(k)}\Phi_{m+1,n}^{(k)}(z)S^{(k)}(z)\Upsilon_k^{(k)}(z) =1,
\end{equation}
where $i(0)=-n$, $i(1)=-m-2$, and $i(2)=n+m$. Then the following lemma holds.

\begin{lemma}
\label{lem:rhn}
A solution of \hyperref[rhn]{\rhn} is given by $\boldsymbol N =  \boldsymbol C \boldsymbol M\boldsymbol D$, where
\[
\left\{
\begin{array}{lll}
\boldsymbol C & := & \diag\left(\begin{matrix} \gamma_{m+1,n}^{(0)} & \gamma_{m+1,n}^{(1)} &  \gamma_{m+1,n}^{(2)} \end{matrix} \right), \medskip \\
\boldsymbol M &:=& \left(\begin{matrix}
S^{(0)} & S^{(1)}/w_\sigma & S^{(2)}/w_\mu \medskip \\
S^{(0)}\Upsilon_1^{(0)} & S^{(1)}\Upsilon_1^{(1)}/w_\sigma & S^{(2)}\Upsilon_1^{(2)}/w_\mu \medskip \\
S^{(0)}\Upsilon_2^{(0)} & S^{(1)}\Upsilon_2^{(1)}/w_\sigma & S^{(2)}\Upsilon_2^{(2)}/w_\mu
\end{matrix} \right). \medskip \\
\boldsymbol D & := & \diag\left(\begin{matrix} \Phi_{m+1,n}^{(0)} & \Phi_{m+1,n}^{(1)} & \Phi_{m+1,n}^{(2)} \end{matrix} \right),
\end{array}
\right.
\]
\end{lemma}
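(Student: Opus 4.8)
The plan is to verify directly that the matrix $\boldsymbol N = \boldsymbol C\boldsymbol M\boldsymbol D$ satisfies \hyperref[rhn]{\rhn}(a,b,c). I would organize the verification by treating the three defining conditions in turn, since each reduces to facts already established about $\Phi_{m+1,n}$ (via Proposition~\ref{prop:h} and \eqref{normalization}), about the auxiliary functions $\Upsilon_k$, and about the Szeg\H{o} function $S$ from Proposition~\ref{prop:Szego}. First, I would check the jump relation \hyperref[rhn]{\rhn}(b). Fix $\nu\in\{\mu,\sigma\}$ and consider the columns of $\boldsymbol M\boldsymbol D$ that are ``active'' across $\Delta_\nu^\circ$, namely the pair $\{1,2\}$ when $\nu=\sigma$ and $\{2,3\}$ when $\nu=\mu$; the third (resp. first) column extends holomorphically across $\Delta_\sigma^\circ$ (resp. $\Delta_\mu^\circ$) because $\Phi^{(2)}_{m+1,n}$, $S^{(2)}$, $\Upsilon_k^{(2)}$ and $w_\mu$ have no branch cut there (and symmetrically). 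On the active pair, the sheets $\RS^{(i)}$ and $\RS^{(j)}$ are glued crosswise along $\boldsymbol\Delta_\nu$, so the traces satisfy $\Phi^{(i)\pm}_{m+1,n} = \Phi^{(j)\mp}_{m+1,n}$, $\Upsilon^{(i)\pm}_k = \Upsilon^{(j)\mp}_k$, and the analogous relations for $S$ modified by the explicit multiplicative jumps in \eqref{S-jump}; combining these with $w_\nu^+ = -w_\nu^-$ on $\Delta_\nu^\circ$ produces exactly the $2\times2$ antidiagonal jump $\bigl(\begin{smallmatrix} 0 & \rho_\nu/w_\nu^+ \\ -w_\nu^+/\rho_\nu & 0 \end{smallmatrix}\bigr)$ acting via $\mathsf T_\nu$. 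The constant diagonal factor $\boldsymbol C$ is unaffected since it has no jumps.

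Second, I would treat the normalization at infinity, \hyperref[rhn]{\rhn}(a). Here the key input is the divisor structure of $\Phi_{m+1,n}$: on $\RS^{(0)}$ it has a zero of order $n$ at $\infty^{(0)}$, on $\RS^{(1)}$ a zero of order $m+1$, and on $\RS^{(2)}$ a pole of order $n+m+1$; the functions $\Upsilon_k^{(k)}$ contribute a simple pole (for $k=1,2$) at $\infty^{(k)}$ while $\Upsilon_0^{(0)}\equiv1$, and $S^{(k)}$ together with $w_\nu$ are $1+\mathcal O(z^{-1})$ or $z^{-1}(1+o(1))$ at infinity in a way that is bounded and nonzero after accounting for the single power of $z$ from $w_\sigma$ or $w_\mu$. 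Tracking the powers of $z$ column by column, each entry of $\boldsymbol M\boldsymbol D$ behaves like a nonzero constant times the appropriate power $z^{-i(k)}$ at $\infty^{(k)}$, and the constants $\gamma^{(k)}_{m+1,n}$ defined in \eqref{constants} are chosen precisely so that $\boldsymbol C\boldsymbol M\boldsymbol D\cdot\diag(z^{-n},z^{-m-1},z^{n+m+1})\to\boldsymbol I$. One must check the off-diagonal entries of this product vanish at infinity, which follows because the subdominant rows carry the extra factors $\Upsilon_1$ or $\Upsilon_2$, lowering the order of vanishing by exactly one, so each off-diagonal entry is $\mathcal O(z^{-1})$.

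Third, the endpoint behavior \hyperref[rhn]{\rhn}(c)=\hyperref[rhy]{\rhy}(c) is read off from the known local behavior of the ingredients: $\Phi_{m+1,n}$ is bounded at the branch points $\boldsymbol a_\mu,\boldsymbol b_\mu,\boldsymbol a_\sigma,\boldsymbol b_\sigma$ (by Proposition~\ref{prop:h}, using that in the present case $b_{\sigma,c}=b_\sigma$ so $\z_{m,n}=\boldsymbol b_{\sigma,c}$ is a zero, not a pole, of $h_{m,n}$ and $\Phi_{m+1,n}$ is bounded there), $S$ is bounded at those branch points by Proposition~\ref{prop:Szego}, and the factors $1/w_\sigma$ and $1/w_\mu$ contribute precisely the $|z-e|^{-1/2}$ singularities in the second and third columns at the endpoints of $\Delta_\sigma$ and $\Delta_\mu$ respectively, matching \hyperref[rhy]{\rhy}(c). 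I expect the main obstacle to be the careful bookkeeping in the second step: one must verify simultaneously that the diagonal entries of $\boldsymbol N\cdot\diag(z^{-n},z^{-m-1},z^{n+m+1})$ tend to $1$ and that all six off-diagonal entries tend to $0$, which requires knowing not just the leading order of each $\Phi^{(k)}_{m+1,n}$, $\Upsilon^{(k)}_j$, $S^{(k)}$ at every $\infty^{(k)}$ but also that the products along the ``wrong'' rows genuinely have strictly smaller order there — a fact that hinges on $\Upsilon_1,\Upsilon_2$ having their only pole at $\infty^{(1)}$, $\infty^{(2)}$ respectively and being finite and nonzero at $\infty^{(0)}$. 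Once the asymptotic orders are tabulated, the remaining verifications are routine.
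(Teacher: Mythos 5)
Your proposal is correct and follows essentially the same route as the paper: a direct verification of \hyperref[rhn]{\rhn}(a,b,c) from the divisors of $\Phi_{m+1,n}$ and $\Upsilon_k$, the jump relations \eqref{S-jump} together with $\Phi^{(i)\pm}=\Phi^{(j)\mp}$ across the cuts, and the local behavior of $S$ and $1/w_\nu$ at the endpoints. One small correction to your third step: in the present case $b_{\sigma,c}\notin\partial D_{\sigma,c}^-$, so by Proposition~\ref{prop:h} the point $\boldsymbol b_{\sigma,c}$ is a simple \emph{pole} of $h_{m,n}$ (not a zero, which is the opposite case of the equivalence); this is harmless, since boundedness of $\Phi_{m+1,n}$ at all four branch points follows directly from its divisor being supported over the points at infinity and needs no appeal to $\z_{m,n}$.
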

\begin{proof}
Since $S$ is non-vanishing in the domain of holomorphy, the functions $w_\nu$ have simple poles at infinity, and the divisors of $\Phi_{m+1,n}$ and $\Upsilon_i$ are explicitly known, it is trivial to check that $\boldsymbol {CMD}$ satisfies \hyperref[rhn]{\rhn}(a). \hyperref[rhn]{\rhn}(b) follows easily from \eqref{S-jump} and the fact that $\Phi^{(1)\pm}=\Phi^{(0)\mp}$ on $\Delta_\sigma$ and $\Phi^{(1)\pm}=\Phi^{(2)\mp}$ on $\Delta_\mu$ for any rational function $\Phi$ on $\RS$. Finally, \hyperref[rhn]{\rhn}(c) is the consequence of the boundedness of $\Phi_{m,n}$ and $S$ around the endpoints of $\Delta_\mu\cup\Delta_\sigma$ and the choice of $w_\sigma$ and $w_\mu$.
\end{proof}

It can be readily checked that $\det(\boldsymbol N)$ is a holomorphic function in $\overline\CP\setminus (\Delta_\mu\cup\Delta_\sigma)$ and $\det(\boldsymbol N)(\infty)=1$. In fact, it has no jumps across $\Delta_\mu^\circ\cup\Delta_\sigma^\circ$ and since it is either bounded or behaves like $\mathcal O(|z-e|^{-1/2})$ near endpoints of $\Delta_\mu\cup\Delta_\sigma$, those points are in fact removable singularities. Therefore $\det(\boldsymbol N)$ is a bounded entire function. That is, $\det(\boldsymbol N)\equiv1$ as follows from the normalization at infinity. It also follows from \eqref{normalization} that $\det(\boldsymbol D)  \equiv 1$. In particular, this means that $\det(\boldsymbol M)$ is constant and non-zero in $\overline\CP$.

To take care of the jumps of $\boldsymbol X$ on $\Gamma_\mu\cup\Gamma_\sigma$, consider the following Riemann-Hilbert Problem (\rhz):
\begin{itemize}
\label{rhz}
\item[(a)] $\boldsymbol Z$ is a holomorphic matrix function in $\overline\CP\setminus(\Gamma_\mu\cup\Gamma_\sigma)$ and $\boldsymbol Z(\infty)=\boldsymbol I$;
\item[(b)] $\boldsymbol Z$ has continuous traces on $\Gamma_\mu\cup\Gamma_\sigma$ that satisfy
\[
\boldsymbol Z_+ = \boldsymbol Z_- (\boldsymbol {MD})\mathsf T_\nu \left(\begin{matrix} 1 & 0 \\ w_\nu/\rho_\nu & 1 \end{matrix}\right)(\boldsymbol {MD})^{-1} \quad \text{on} \quad \Gamma_\nu, \quad \nu\in\{\mu,\sigma\}.
\]
\end{itemize}
Then the following lemma takes place.
\begin{lemma}
\label{lem:rhz}
The solution of \hyperref[rhz]{\rhz} exists for all $n$ large enough and satisfies
\begin{equation}
\label{Z}
\boldsymbol Z=\boldsymbol I +\boldsymbol {\mathcal O}\big( C_{\mu,\sigma}^{-n}\big)
\end{equation}
for some constant $C_{\mu,\sigma}>1$, where $\boldsymbol {\mathcal O}(\cdot)$ holds uniformly in $\overline\CP$.
\end{lemma}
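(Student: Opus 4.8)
The plan is to reduce \hyperref[rhz]{\rhz} to a small-norm Riemann-Hilbert problem and invoke the standard existence-and-estimate machinery for such problems. First I would analyze the jump matrix
\[
\boldsymbol J_\nu := (\boldsymbol{MD})\mathsf T_\nu\left(\begin{matrix} 1 & 0 \\ w_\nu/\rho_\nu & 1 \end{matrix}\right)(\boldsymbol{MD})^{-1}
\]
on $\Gamma_\nu$, $\nu\in\{\mu,\sigma\}$, and show that $\boldsymbol J_\nu = \boldsymbol I + \boldsymbol{\mathcal O}(C_{\mu,\sigma}^{-n})$ uniformly on $\Gamma_\nu$. Writing $\mathsf T_\nu(\cdot)$ out, the only non-identity entry of $\mathsf T_\nu\left(\begin{smallmatrix} 1 & 0 \\ w_\nu/\rho_\nu & 1 \end{smallmatrix}\right)$ is a single off-diagonal $w_\nu/\rho_\nu$ placed so that conjugation by $\boldsymbol M\boldsymbol D$ turns it into an expression proportional to the ratio $\Phi_{m+1,n}^{(2)}/\Phi_{m+1,n}^{(1)}$ on $\Gamma_\mu$ and to $\Phi_{m+1,n}^{(1)}/\Phi_{m+1,n}^{(0)}$ on $\Gamma_\sigma$ (the bounded $\boldsymbol M$-entries, the $w_\nu$, and the $\gamma$-constants contribute only bounded factors, uniformly in $n$ by \eqref{convVs} since $\RS_{m,n}=\RS$ here). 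On $\Gamma_\mu$, estimate \eqref{estPhi21} gives $|\Phi_{m+1,n}^{(2)}/\Phi_{m+1,n}^{(1)}|<1$ in $\overline\CP\setminus\Delta_\mu$, and since $\Gamma_\mu$ is a fixed compact set disjoint from $\Delta_\mu$, the ratio is bounded by $e^{-c_0(n+m)}$ for a fixed $c_0>0$; on $\Gamma_\sigma$, the assumption $\Gamma_\sigma\subset D_{\sigma,c}^+$ together with the uniform separation of $\Gamma_\sigma$ from $\partial D_{\sigma,\frac n{n+m}}^+$ and representation \eqref{logPhi} gives $|\Phi_{m+1,n}^{(1)}/\Phi_{m+1,n}^{(0)}| = \exp\{-(n+m+1)(V^{\tau_{\mu,c}-2\tau_{\sigma,c}}+3\ell_{\sigma,c})\}\leq e^{-c_1(n+m)}$ for a fixed $c_1>0$. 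Taking $C_{\mu,\sigma}=e^{\min\{c_0,c_1\}c}$ (any value slightly less than the exponential rate, to absorb the ray-sequence fluctuations of $c_n$ around $c$) yields $\|\boldsymbol J_\nu - \boldsymbol I\|_{L^\infty(\Gamma_\nu)} = \mathcal O(C_{\mu,\sigma}^{-n})$.

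Second, I would set up the singular integral equation. Since $\boldsymbol Z_+ - \boldsymbol Z_- = \boldsymbol Z_-(\boldsymbol J - \boldsymbol I)$ on $\Gamma:=\Gamma_\mu\cup\Gamma_\sigma$ with $\boldsymbol Z(\infty)=\boldsymbol I$, the problem is equivalent to solving
\[
\boldsymbol Z_- = \boldsymbol I + \mathcal C_-\big(\boldsymbol Z_-(\boldsymbol J-\boldsymbol I)\big)
\]
on a bounded smooth contour $\Gamma$, where $\mathcal C_-$ is the Cauchy projection onto the minus side. The operator $\boldsymbol W\mapsto \mathcal C_-(\boldsymbol W(\boldsymbol J-\boldsymbol I))$ is bounded on $L^2(\Gamma)$ (equivalently $L^\infty$, $\Gamma$ being compact) with norm $\mathcal O(\|\boldsymbol J-\boldsymbol I\|_{L^\infty(\Gamma)}) = \mathcal O(C_{\mu,\sigma}^{-n})$, hence for $n$ large it is a contraction; by the Neumann series the equation has a unique solution $\boldsymbol Z_-$ with $\|\boldsymbol Z_- - \boldsymbol I\|_{L^2(\Gamma)} = \mathcal O(C_{\mu,\sigma}^{-n})$. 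Then $\boldsymbol Z(z):=\boldsymbol I + \frac1{2\pi\mathrm i}\int_\Gamma \boldsymbol Z_-(s)(\boldsymbol J(s)-\boldsymbol I)\frac{\mathrm ds}{s-z}$ solves \hyperref[rhz]{\rhz}, and for $z$ at a fixed distance from $\Gamma$ the Cauchy kernel is bounded, so $\boldsymbol Z(z)=\boldsymbol I + \boldsymbol{\mathcal O}(C_{\mu,\sigma}^{-n})$ there. To upgrade this to a uniform bound on all of $\overline\CP$ (including on $\Gamma$ itself), I would use the standard argument: $\boldsymbol Z_\pm = \boldsymbol I + \mathcal C_\pm(\boldsymbol Z_-(\boldsymbol J-\boldsymbol I))$, and the boundedness of $\mathcal C_\pm$ on $L^2(\Gamma)$ plus the already-established $L^2$ smallness of $\boldsymbol Z_-(\boldsymbol J-\boldsymbol I)$ gives the same estimate for the traces; combined with the interior estimate this yields \eqref{Z}.

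The main obstacle is making the bound on $\boldsymbol J_\nu - \boldsymbol I$ genuinely uniform along the ray sequence rather than merely for the fixed limiting surface $\RS=\RS_c$. One must check that all the ingredients of $\boldsymbol{MD}$ that depend on $n$ through $c_n=\frac n{n+m}$ — namely $\Phi_{m+1,n}$, the Szeg\H{o} function $S=S_{c_n}$ (here it equals $S_c$ because $\Delta_{\sigma,c_n}=\Delta_\sigma$ for large $n$), and the normalizing constants $\gamma_{m+1,n}^{(k)}$ — stay in a fixed compact family, and that the exponential decay rate of the two critical ratios does not degenerate to $1$ as $n\to\infty$. This is where \eqref{convVs} and the uniform separation of $\Gamma_\sigma$ from $\partial D_{\sigma,\frac n{n+m}}^+$ do the essential work: $V^{\tau_{\mu,c_n}-2\tau_{\sigma,c_n}}+3\ell_{\sigma,c_n}$ converges locally uniformly to its $c$-analog, which is strictly positive on the compact set $\Gamma_\sigma$, so the infimum over $\Gamma_\sigma$ and over all large $n$ is a positive constant; the $\Gamma_\mu$ side is easier since \eqref{estPhi21} is a clean strict inequality on a fixed compact set and the constant $c_0$ can be extracted directly from $\min_{\Gamma_\mu}(V^{\tau_{\sigma,c}}-2V^{\tau_{\mu,c}}+3\ell_{\mu,c})>0$ after passing to the limit. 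Once this uniformity is in place, the contraction-mapping step is entirely routine.
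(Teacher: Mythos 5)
Your proposal follows essentially the same route as the paper: you compute the conjugated jump matrices, identify the same two critical ratios $\Phi_{m+1,n}^{(1)}/\Phi_{m+1,n}^{(0)}$ on $\Gamma_\sigma$ and $\Phi_{m+1,n}^{(2)}/\Phi_{m+1,n}^{(1)}$ on $\Gamma_\mu$, bound them geometrically via \eqref{estPhi21} and the uniform separation of $\Gamma_\sigma$ from $\partial D_{\sigma,\frac n{n+m}}^+$, and then run the standard small-norm argument. The only difference is that you spell out the singular-integral-equation/Neumann-series machinery that the paper simply cites as \cite[Corollary~7.108]{Deift}, so the two proofs coincide in substance.
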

\begin{proof}
The jump matrix for $\boldsymbol Z$ on $\Gamma_\sigma$ is equal to
\begin{equation}
\label{geom1}
\boldsymbol I + \frac{ w_\sigma }{\rho_\sigma } \frac{\Phi_{m+1,n}^{(1)}}{\Phi_{m+1,n}^{(0)}} \boldsymbol M\boldsymbol E_{21}\boldsymbol M^{-1}.
\end{equation}
where $\boldsymbol E_{ij}$ is the matrix with all zero entries except for $(i,j)$-th, which is $1$. Since $\boldsymbol M$ is fixed and has constant determinant, it follows from \eqref{logPhi}, definition of $D_{\sigma,\frac n{n+m+1}}^+$ in \eqref{domains}, and the choice of $\Gamma_\sigma$ that the jump of $\boldsymbol Z$ on $\Gamma_\sigma$ is of the form $\boldsymbol I +\boldsymbol {\mathcal O}\big( C_{\mu,\sigma}^{-n}\big)$ for some $C_{\mu,\sigma}>1$. Similarly, we have that the jump $\boldsymbol Z$ on $\Gamma_\mu$ is equal to
\begin{equation}
\label{geom2}
\boldsymbol I + \frac{w_\mu}{\rho_\mu} \frac{\Phi_{m+1,n}^{(2)}}{\Phi_{m+1,n}^{(1)}} \boldsymbol M\boldsymbol E_{32}\boldsymbol M^{-1},
\end{equation}
which is also of the form $\boldsymbol I +\boldsymbol {\mathcal O}\big( C_{\mu,\sigma}^{-n}\big)$ for some properly adjusted $C_{\mu,\sigma}>1$ by \eqref{estPhi21}. The conclusion of the lemma follows now from the same argument as in \cite[Corollary~7.108]{Deift} with another adjustment of $C_{\mu,\sigma}$.
\end{proof}

Finally, let $\boldsymbol Z$ be a solution of \hyperref[rhz]{\rhz} granted by Lemma~\ref{lem:rhz} and $\boldsymbol N=\boldsymbol{CMD}$ be as in Lemma~\ref{lem:rhn}. Then it can be easily checked that $\boldsymbol X = \boldsymbol{CZMD}$ solves \hyperref[rhx]{\rhx} and therefore
\begin{equation}
\label{solution}
\boldsymbol Y = \boldsymbol{CZMD} \left\{
\begin{array}{ll}
\mathsf T_\nu\left( \begin{matrix} 1 & 0  \\ w_\mu/\rho_\nu & 1\end{matrix} \right), & \text{in} \quad \Omega_\nu, \quad \nu\in\{\mu,\sigma\}, \medskip \\
\boldsymbol I, & \text{otherwise}.
\end{array}
\right.
\end{equation}
solves \hyperref[rhy]{\rhy}.

\subsection{Non-Linear Steepest Descent Analysis in the Case $\Delta_{\sigma,c}\cap\partial D_{\sigma,c}^-\neq\varnothing$}
\label{ssec:local}

In the case $\Delta_{\sigma,c}\cap\partial D_{\sigma,c}^-\neq\varnothing$ there no longer exists a Jordan curve $\Gamma_\sigma\subset D_{\sigma,c}^+$ encircling $\Delta_{\sigma,c}$, which prevents us from carrying out the estimate \eqref{geom1}.
\begin{figure}[ht!]
\centering
\includegraphics[scale=.5]{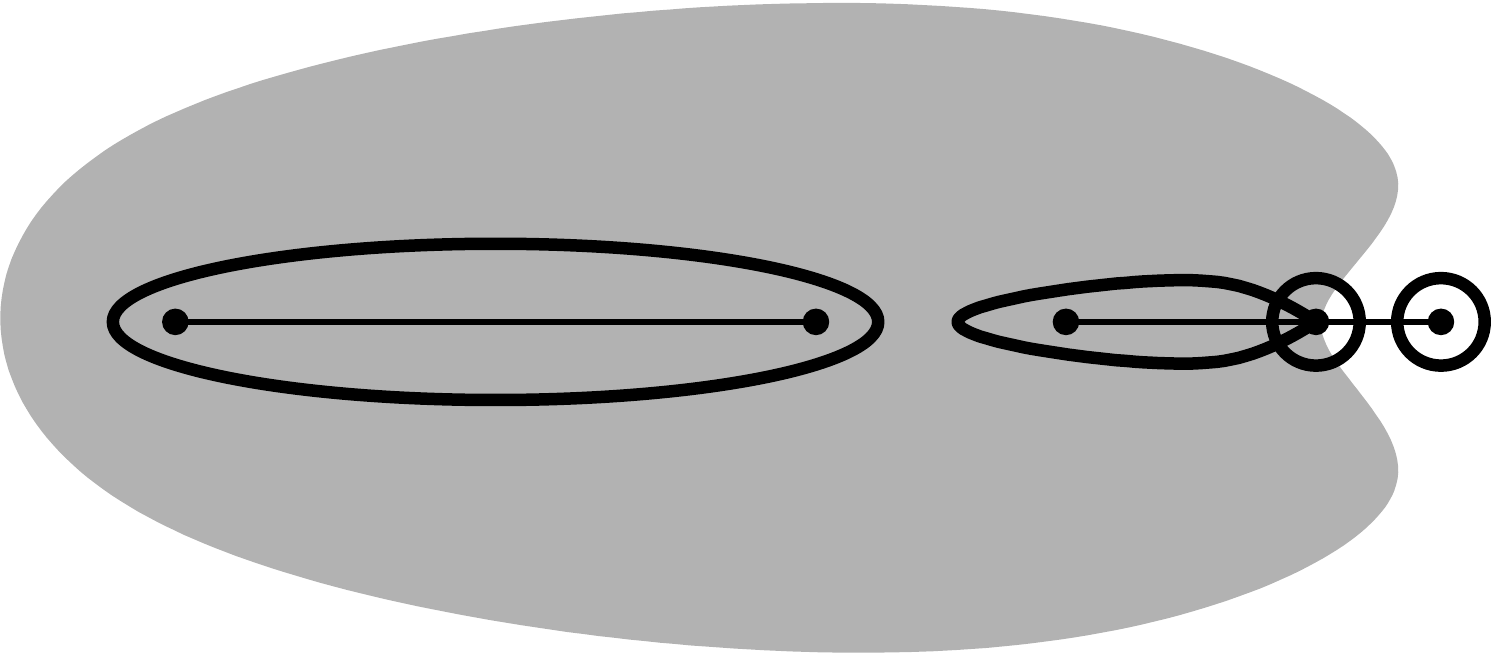}
\begin{picture}(0,0)
\put(-100,80){$D_{\sigma,c}^+$}
\put(-150,65){$\Gamma_\mu$}
\put(-60,60){$\Gamma_{\sigma,c}$}
\put(-35,32){$U_{b_{\sigma,c}}$}
\put(-12,32){$U_{b_\sigma}$}
\end{picture}
\caption{\small Contours $\Gamma_\mu$, $\Gamma_{\sigma,c}$, and the disks $U_{b_{\sigma,c}}$, $U_{b_\sigma}$.}
\label{fig:lens}
\end{figure}
Hence, we shall require the Jordan curve $\Gamma_{\sigma,c}$ to encircle $\Delta_{\sigma,c}$ except for the point $b_{\sigma,c}$, which they have in common. Moreover, given disjoint disks $U_{b_{\sigma,c}}$ and $U_{b_\sigma}$ centered at $b_{\sigma,c}$ and $b_\sigma$, respectively, (unless $b_{\sigma,c}=b_\sigma$ in which case these disks coincide) we also require that $\Gamma_{\sigma,c}\setminus U_{b_{\sigma,c}}\subset D_{\sigma,c}^+$, see Figure~\ref{fig:lens}. To slightly alleviate the notation, let us set
\[
b_{m+1,n}:=b_{\sigma,\frac n{n+m+1}}, \quad \Delta_{m+1,n}:=\Delta_{\sigma,\frac n{n+m+1}}, \text{and} \quad \Gamma_{m+1,n}:=\Gamma_{\sigma,\frac n{n+m+1}},
\]
where the curves $\Gamma_{m+1,n}$ are selected analogously to $\Gamma_{\sigma,c}$ with the requirement that $\Gamma_{m+1,n}\to\Gamma_{\sigma,c}$ as $n\to\infty$ in Hausdorff metric (recall that $b_{m+1,n}\to b_{\sigma,c}$ as $n\to\infty$, see \eqref{convVs}). We take $\Gamma_\mu$ to be a Jordan curve encircling $\Delta_\mu$, which is disjoint from all $\Gamma_{m+1,n}$. As before, we assume that $\rho_\mu$ is holomorphic across $\Gamma_\mu$ and $\rho_\nu$ is holomorphic across each $\Gamma_{m+1,n}$. We continue to denote by $\Omega_\mu$ and $\Omega_{m+1,n}$ the domains bounded by $\Gamma_\mu\cup\Delta_\mu$ and $\Gamma_{m+1,n}\cup\Delta_{m+1,n}$, respectively.

In what follows, we shall often refer back to Riemann-Hilbert problems formulated in Section~\ref{ssec:no-local}. For each such reference it is understood that when $\Delta_\sigma$, $\Gamma_\sigma$, and $\Omega_\sigma$ occur, they should be replaced by $\Delta_{m+1,n}$, $\Gamma_{m+1,n}$, and $\Omega_{m+1,n}$.

Define $\boldsymbol X$ by \eqref{X}. Then $\boldsymbol X$ satisfies \hyperref[rhx]{\rhx}(a,c) and \hyperref[rhx]{\rhx}(b) with an additional jump
\[
\boldsymbol X_+ = \boldsymbol X_- \mathsf T_\sigma \left(\begin{matrix} 1 & \rho_\sigma/w_\sigma^+ \\ 0 & 1 \end{matrix}\right) \quad \text{on} \quad \Delta_\sigma^\circ\setminus\Delta_{m+1,n}.
\]
Clearly, Lemma~\ref{lem:rhs} remains valid.

Define $\boldsymbol D$ as in Lemma~\ref{lem:rhn}, where $\Phi_{m+1,n}$ is a rational function on $\RS_{m+1,n}$ with the divisor $(n+m+1)\infty^{(2)}-n\infty^{(0)}-(m+1)\infty^{(1)}$ and normalized as in \eqref{normalization}. Let $S_{m+1,n}$ be the function on $\RS_{m+1,n}$ granted by Proposition~\ref{prop:Szego} applied with $c=\frac n{n+m+1}$ and $\Upsilon_k:=\Upsilon_{k;m+1,n}$, $k\in\{0,1,2\}$, be the rational function on $\RS_{m+1,n}$ with the divisor $\infty^{(0)}-\infty^{(k)}$ and the normalization as in \eqref{normalization}. Define matrices $\boldsymbol M$ and $\boldsymbol C$ as in Lemma~\ref{lem:rhn} using the above functions. Then $\boldsymbol N=\boldsymbol{CMD}$ again solves \hyperref[rhn]{\rhn} and it is still true that $\det(\boldsymbol N)\equiv1$. Therefore $\det(\boldsymbol M)$ is a constant, but in this case it might depend on $(m,n)$. However, observe that an analogous matrix $\boldsymbol M=\boldsymbol M_c$ can be defined on the ``limiting'' surface $\RS_c$ as well. Moreover, it was shown in \cite[Section~7]{uYa} that
\begin{equation}
\label{convFuns}
S_{m+1,n} \to S_c \quad \text{and} \quad \Upsilon_{k;m+1,n} \to \Upsilon_{k;c}
\end{equation}
uniformly on $\RS_{c,\delta}$ for any $\delta>0$, where $\RS_{c,\delta}$ is obtained from $\RS_c$ be removing circular neighborhoods of radius $\delta$ around each branch point of $\RS_c$ and functions $S_{m+1,n}$ and $\Upsilon_{k;m+1,n}$ are carried over to $\RS_{c,\delta}$ with the help of natural projections. Hence, $\det(\boldsymbol M_{m+1,n})\to\det(\boldsymbol M_c)$, in particular, the determinants $\det(\boldsymbol M)$ are uniformly bounded away from zero and infinity with $m$ and $n$.

Let $\boldsymbol P_b$, $b\in\{b_{\sigma,c},b_\sigma\}$, be a matrix-valued function that solves \hyperref[rhx]{\rhx} inside of $U_b$ and satisfies
 \begin{equation}
 \label{Pb}
 \boldsymbol P_b = \boldsymbol M \big( \boldsymbol I + \boldsymbol {\mathcal O}\big(\varepsilon_{m+1,n}\big) \big)\boldsymbol D
 \end{equation}
 uniformly on $\partial U_b$, where $0<\varepsilon_{m+1,n}\to0$ as $n\to\infty$. Such matrices do exist. Indeed, when $b_{\sigma,c}\neq b_\sigma$, one can easily check that
 \[
 \boldsymbol P_{b_\sigma} = \boldsymbol M\mathsf T_\sigma \left(\begin{matrix} 1 & \mathcal C_\sigma \Phi^{(0)}_{m+1,n}/\Phi^{(1)}_{m+1,n}\\ 0 & 1 \end{matrix}\right)\boldsymbol D,
 \]
 where $\mathcal C_\sigma(z):=\frac1{2\pi\mathrm i}\int_{\Delta_\sigma}\frac{\rho_\sigma(x)}{x-z}\frac{\mathrm d\sigma(x)}{w_\sigma^+(x)}$. As the construction of $\boldsymbol P_{b_{\sigma,c}}$ is quite long and is absolutely identical to the one in \cite[Sections~9.4 and~9.5]{uYa}, we omit it here. Let us just mention that it is based on the model Riemann-Hilbert problem associated with solutions Panlev\'e XXXIV equation \cite[Section~4.2]{uYa}, see also \cite{IKOs08,IKOs09}.

\begin{figure}[ht!]
\centering
\includegraphics[scale=.5]{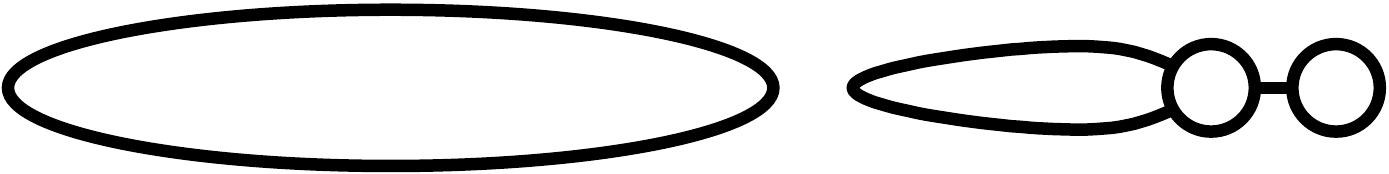}
\begin{picture}(0,0)
\put(-150,29){$\Gamma_\mu$}
\put(-72,23){$\Gamma_{m+1,n}$}
\put(-37,23){$U_{b_{\sigma,c}}$}
\put(-12,23){$U_{b_\sigma}$}
\end{picture}
\caption{\small Lens $\Sigma_{\boldsymbol Z}$ consisting of the curve $\Gamma_\mu$, the arc $\Gamma_{m+1,n}\setminus U_{b_{\sigma,c}}$, the interval $\Delta_\sigma\setminus(\Delta_{\sigma,c}\cup U_{b_{\sigma,c}}\cup U_{b_\sigma})$, and the circles $\partial U_{b_{\sigma,c}}$, $\partial U_{b_\sigma}$.}
\label{fig:lensZ}
\end{figure}

Let the contour $\Sigma_{\boldsymbol Z}$ be as depicted on Figure~\ref{fig:lensZ}. The last matrix-valued function needed to solve \hyperref[rhy]{\rhy} is described by the following Riemann-Hilbert problem:
\begin{itemize}
\label{rhz2}
\item[(a)] $\boldsymbol Z$ is a holomorphic matrix function in $\overline\CP\setminus\Sigma_{\boldsymbol Z}$ and $\boldsymbol Z(\infty)=\boldsymbol I$;
\item[(b)] $\boldsymbol Z$ has continuous traces on $\Sigma_{\boldsymbol Z}$ except perhaps at its branching points that satisfy
\[
\boldsymbol Z_+ = \boldsymbol Z_- (\boldsymbol {MD})\mathsf T_\nu \left(\begin{matrix} 1 & 0 \\ w_\nu/\rho_\nu & 1 \end{matrix}\right)(\boldsymbol {MD})^{-1}
\]
on $\Gamma_\mu$ when $\nu=\mu$ and $\Gamma_{m+1,n}\setminus U_{b_{\sigma,c}}$ when $\nu=\sigma$,
\[
\boldsymbol Z_+ = \boldsymbol Z_- (\boldsymbol {MD})\mathsf T_\sigma \left(\begin{matrix} 1 & \rho_\sigma/w_\sigma^+ \\ 0 & 1 \end{matrix}\right)(\boldsymbol {MD})^{-1}
\]
on $\Delta_\sigma\setminus(\Delta_{\sigma,c}\cup U_{b_{\sigma,c}}\cup U_{b_\sigma})$, and  $\boldsymbol Z_+ = \boldsymbol Z_-\boldsymbol P_b(\boldsymbol {MD})^{-1}$, on $\partial U_b$, $b\in\{b_{\sigma,c},b_\sigma\}$.
\end{itemize}
Exactly as in the previous case, the following lemma holds.
\begin{lemma}
\label{lem:rhz2}
The solution of \hyperref[rhz2]{\rhz} exists for all $n$ large enough and satisfies
\begin{equation}
\label{Z2}
\boldsymbol Z=\boldsymbol I +\boldsymbol {\mathcal O}\big( \varepsilon_{m+1,n} \big)
\end{equation}
where $\boldsymbol {\mathcal O}(\cdot)$ holds uniformly in $\overline\CP$ and $\varepsilon_{m+1,n}$ are the constants from \eqref{Pb}.
\end{lemma}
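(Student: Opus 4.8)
The plan is to follow verbatim the scheme behind Lemma~\ref{lem:rhz}, the only additional bookkeeping being the presence of the arc $\Gamma_{m+1,n}\setminus U_{b_{\sigma,c}}$, the residual interval on $\Delta_\sigma$, and the two circles $\partial U_{b_{\sigma,c}}$, $\partial U_{b_\sigma}$ in the contour $\Sigma_{\boldsymbol Z}$. First I would estimate the jump matrix $\boldsymbol J_{\boldsymbol Z}$ of \hyperref[rhz2]{\rhz} on each piece of $\Sigma_{\boldsymbol Z}$. On $\Gamma_\mu$ and on $\Gamma_{m+1,n}\setminus U_{b_{\sigma,c}}$ the computation is identical to \eqref{geom1}--\eqref{geom2}: there $\boldsymbol J_{\boldsymbol Z}$ equals $\boldsymbol I$ plus a term proportional to $(\Phi_{m+1,n}^{(2)}/\Phi_{m+1,n}^{(1)})\boldsymbol M\boldsymbol E_{32}\boldsymbol M^{-1}$, respectively $(\Phi_{m+1,n}^{(1)}/\Phi_{m+1,n}^{(0)})\boldsymbol M\boldsymbol E_{21}\boldsymbol M^{-1}$, and by \eqref{logPhi}, \eqref{estPhi21}, the definition \eqref{domains} of $D_{\sigma,c}^+$, the requirement $\Gamma_{m+1,n}\setminus U_{b_{\sigma,c}}\subset D_{\sigma,c}^+$, and the convergence \eqref{convVs} allowing one to transfer these bounds from $\RS_c$ to $\RS_{m+1,n}$, these terms are $\boldsymbol{\mathcal O}\big(C_{\mu,\sigma}^{-n}\big)$ for some $C_{\mu,\sigma}>1$ uniformly in $n$. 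On $\Delta_\sigma\setminus(\Delta_{\sigma,c}\cup U_{b_{\sigma,c}}\cup U_{b_\sigma})$, which for $n$ large is disjoint from $\Delta_{m+1,n}$ because $b_{m+1,n}\to b_{\sigma,c}$ forces $\Delta_{m+1,n}\subset\Delta_{\sigma,c}\cup U_{b_{\sigma,c}}$, the matrix $\boldsymbol{MD}$ is holomorphic across it and the deviation of $\boldsymbol J_{\boldsymbol Z}$ from $\boldsymbol I$ is proportional to $\Phi_{m+1,n}^{(0)}/\Phi_{m+1,n}^{(1)}$; since this residual interval is a compact subset of $D_{\sigma,c}^-$ — by Proposition~\ref{prop:nikishin} one has $\Delta_\sigma\setminus\Delta_{\sigma,c}\subset D_{\sigma,c}^-$ and removing $U_{b_{\sigma,c}}$ keeps us off the touching point — the same reasoning gives $\boldsymbol J_{\boldsymbol Z}=\boldsymbol I+\boldsymbol{\mathcal O}\big(C_{\mu,\sigma}^{-n}\big)$ there as well. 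Finally, on $\partial U_b$, $b\in\{b_{\sigma,c},b_\sigma\}$, the matching relation \eqref{Pb} yields $\boldsymbol J_{\boldsymbol Z}=\boldsymbol P_b(\boldsymbol{MD})^{-1}=\boldsymbol M\big(\boldsymbol I+\boldsymbol{\mathcal O}(\varepsilon_{m+1,n})\big)\boldsymbol M^{-1}$, which is $\boldsymbol I+\boldsymbol{\mathcal O}(\varepsilon_{m+1,n})$ once one notes that $\boldsymbol M$ and $\boldsymbol M^{-1}$ are uniformly bounded on $\partial U_b$: the entries of $\boldsymbol M$ are bounded because the circles are fixed and, for $n$ large, contain the moving branch point $b_{m+1,n}$ strictly inside, while $\det\boldsymbol M$ stays bounded away from $0$ and $\infty$ by \eqref{convFuns}.

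Second, I would collect these estimates. Since $\varepsilon_{m+1,n}\to0$ only at the slow (algebraic) rate coming from the Painlev\'e~XXXIV local parametrix of \cite[Section~4.2]{uYa}, the exponentially small contributions $C_{\mu,\sigma}^{-n}$ are $o(\varepsilon_{m+1,n})$, so altogether $\|\boldsymbol J_{\boldsymbol Z}-\boldsymbol I\|_{L^\infty(\Sigma_{\boldsymbol Z})\cap L^2(\Sigma_{\boldsymbol Z})}=\boldsymbol{\mathcal O}(\varepsilon_{m+1,n})$. The standard small-norm theory for Riemann-Hilbert problems — exactly the argument invoked for Lemma~\ref{lem:rhz}, cf. \cite[Corollary~7.108]{Deift} — then produces, for all $n$ large, a unique solution $\boldsymbol Z$ of \hyperref[rhz2]{\rhz}, represented through a Cauchy-type integral of $\boldsymbol Z_-(\boldsymbol J_{\boldsymbol Z}-\boldsymbol I)$ over $\Sigma_{\boldsymbol Z}$, and satisfying $\boldsymbol Z=\boldsymbol I+\boldsymbol{\mathcal O}(\varepsilon_{m+1,n})$ uniformly in $\overline\CP$, which is \eqref{Z2}.

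The one genuinely delicate point — and the main obstacle — is that $\Sigma_{\boldsymbol Z}$ itself varies with $n$, through $\Gamma_{m+1,n}$, $\Delta_{m+1,n}$, $b_{m+1,n}$, and the finitely many points where these arcs meet each other or the circles $\partial U_b$; one must therefore be sure that the norm of the Cauchy projection on $L^2(\Sigma_{\boldsymbol Z})$, and hence the resolvent $(\boldsymbol I-\boldsymbol{\mathcal C}_{\boldsymbol J_{\boldsymbol Z}})^{-1}$, is bounded uniformly in $n$. I would dispose of this by fixing a smooth ambient isotopy that carries $\Sigma_{\boldsymbol Z}$, for each large $n$, onto the limiting contour assembled from $\Gamma_{\sigma,c}$, $\Delta_{\sigma,c}$, and the fixed circles: since $\Gamma_{m+1,n}\to\Gamma_{\sigma,c}$ in Hausdorff metric and $b_{m+1,n}\to b_{\sigma,c}$ by \eqref{convVs}, these isotopies converge to the identity in $C^1$, so the associated singular integral operators are uniformly bounded and the Neumann series for $(\boldsymbol I-\boldsymbol{\mathcal C}_{\boldsymbol J_{\boldsymbol Z}})^{-1}$ converges for $n$ large. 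At the self-intersection points the jump matrices obey the cyclic compatibility relation — this is guaranteed by the way $\boldsymbol P_b$ is built to coincide with the surrounding factorizations of \hyperref[rhx]{\rhx}(b) on $\partial U_b$ and with the jumps on $\Gamma_{m+1,n}$ and on $\Delta_\sigma\setminus\Delta_{m+1,n}$ at the respective meeting points — so $\boldsymbol J_{\boldsymbol Z}-\boldsymbol I$ stays $\boldsymbol{\mathcal O}(\varepsilon_{m+1,n})$ across those points and the estimate above goes through without change.
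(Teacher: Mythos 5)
Your proposal follows the same route as the paper: piecewise estimation of the jump matrix (exponentially small on $\Gamma_\mu$, $\Gamma_{m+1,n}\setminus U_{b_{\sigma,c}}$, and the residual interval in $D_{\sigma,c}^-$ via \eqref{logPhi} and \eqref{estPhi21}; of size $\varepsilon_{m+1,n}$ on $\partial U_b$ via \eqref{Pb} and the uniform control of $\boldsymbol M$ and $\det\boldsymbol M$ from \eqref{convFuns}), followed by the small-norm argument of \cite[Corollary~7.108]{Deift}, with the final error dominated by $\varepsilon_{m+1,n}$ since that term decays only algebraically. Your additional remarks on the uniformity of the Cauchy operator over the $n$-dependent contours and the cyclic compatibility at intersection points are correct refinements of details the paper leaves implicit.
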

\begin{proof}
Recall that the determinants $\det(\boldsymbol M)$ are identically constant as functions of $z$ and that these constants are uniformly separated from zero and infinity with $m$ and $n$. Hence, the estimates of the size of the jumps on $\Gamma_\mu$ and $\Gamma_{m+1,n}\setminus U_{b_{\sigma,c}}$ are absolutely analogous to \eqref{geom1} and \eqref{geom2}. The jumps in this case are geometrically close to the identity where the constant of proportionality depends on how close $\Gamma_{m+1,n}\setminus U_{b_{\sigma,c}}$ is to $\partial D_{\sigma,c}$ (the latter sets are uniformly separated from each other by our construction of $\Gamma_{m+1,n}$). The jump on $\Delta_\sigma\setminus(\Delta_{\sigma,c}\cup U_{b_{\sigma,c}}\cup U_{b_\sigma})$ is also geometrically close to the identity since this interval belongs to $D_{\sigma,c}^-$ where $|\Phi_{m+1,n}^{(1)}|>|\Phi_{m+1,n}^{(0)}|$. Finally, we see that the jump on $\partial U_b$, $b\in\{b_{\sigma,c},b_\sigma\}$, is $\boldsymbol {\mathcal O}\big( \varepsilon_{m+1,n} \big)$ close to the identity by \eqref{Pb} and the normality of $\boldsymbol M$, see \eqref{convFuns}. The existence of $\boldsymbol Z$ again follows from \cite[Corollary~7.108]{Deift}. The size of the error is proportional to $ \varepsilon_{m+1,n}$ as the latter is of order $1/n$ at best, see \cite[Sections~9.4 and~9.5]{uYa}.
\end{proof}

Altogether, the solution of \hyperref[rhx]{\rhx} is given by
\begin{equation}
\label{solution2}
\boldsymbol X = \boldsymbol{CZ} \left\{
\begin{array}{ll}
\boldsymbol P_b, & \text{in} \quad U_b, \quad b\in\{b_{\sigma,c},b_\sigma\}, \medskip \\
\boldsymbol{MD}, & \text{otherwise},
\end{array}
\right.
\end{equation}
and then the solution of \hyperref[rhy]{\rhy} is obtained by inverting \eqref{X}.

\subsection{Asymptotic Analysis}

Below we write $S_{m+1,n}$ and $\Upsilon_{k;m+1,n}$ irrespectively of whether we are in the case of Section~\ref{ssec:no-local} or Section~\ref{ssec:local}. Write $\boldsymbol Z=\boldsymbol I + \big[\upsilon_{m+1,n}^{(i,j)}\big]_{i,j=1}^3$, where we know from Lemmas~\ref{lem:rhz} and~\ref{lem:rhz2} that
\begin{equation}
\label{final}
|\upsilon_{m+1,n}^{(i,j)}| =\mathcal O\big(C_{\mu,\sigma}^{-n}\big) \quad \text{or} \quad |\upsilon_{m+1,n}^{(i,j)}| =\mathcal O\big(\varepsilon_{m+1,n}\big)
\end{equation}
uniformly in $\overline\CP$ depending on the considered case ($\upsilon_{m+1,n}^{(i,j)}(\infty)=0$ as $\boldsymbol{Z}(\infty)=\boldsymbol{I}$). Given any closed set $K\subset\overline\CP\setminus\Delta_\sigma$, choose $\Omega_\sigma$ or $\Omega_{m+1,n}\cup\bigcup_b U_b$ so that $K$ belongs to the complement of its closure. Then we get from \eqref{solution} and \eqref{solution2} that $\boldsymbol Y=\boldsymbol{CZMD}$ on $K$ and therefore
\[
[\boldsymbol Y]_{11} = \gamma_{m+1,n}^{(0)}\Phi_{m+1,n}^{(0)}S_{m+1,n}^{(0)}\left( 1+\upsilon_{m+1,n}^{(1,1)} + \upsilon_{m+1,n}^{(1,2)}\Upsilon_{1;m+1,n}^{(0)} + \upsilon_{m+1,n}^{(1,3)}\Upsilon_{2;m+1,n}^{(0)} \right)
\]
on $K$ regardless whether it intersects $\Omega_\mu$ or not. The first relation in \eqref{asymptotics1} now follows from \eqref{Y}, \eqref{convFuns}, and \eqref{final}. On the other hand, we get from \eqref{solution} and \eqref{solution2} that
\begin{multline*}
[\boldsymbol Y]_{11} = \gamma_{m+1,n}^{(0)}\Phi_{m+1,n}^{(0)\pm}S_{m+1,n}^{(0)\pm}\left( 1+\upsilon_{m+1,n}^{(1,1)} + \upsilon_{m+1,n}^{(1,2)}\Upsilon_{1;m+1,n}^{(0)\pm} + \upsilon_{m+1,n}^{(1,3)}\Upsilon_{2;m+1,n}^{(0)\pm}  \right) + \\
\gamma_{m+1,n}^{(0)}\Phi_{m+1,n}^{(1)\pm}\frac{w_\sigma^\pm}{w_{\sigma,c}^\pm\rho_\sigma}S_{m+1,n}^{(1)\pm}\left( 1+\upsilon_{m+1,n}^{(1,1)} + \upsilon_{m+1,n}^{(1,2)}\Upsilon_{1;m+1,n}^{(1)\pm} + \upsilon_{m+1,n}^{(1,3)}\Upsilon_{2;m+1,n}^{(1)\pm} \right)
\end{multline*}
on $\Delta_\sigma$ or $\Delta_{\sigma,c}\setminus U_{b_{\sigma,c}}$, depending on the considered case. The second relation in \eqref{asymptotics1} now follows from \eqref{Y}, \eqref{S-jump}, \eqref{convFuns}, \eqref{final}, and the fact that $\Upsilon_k^{(0)\pm}=\Upsilon_k^{(1)\mp}$ on $\Delta_{\sigma,c}^\circ$.

Now, let $K\subset\overline\CP\setminus(\Delta_\mu\cup\Delta_\sigma)$. Adjust the set $\Omega_\mu\cup\Omega_\sigma$ or $\Omega_\mu\cup\Omega_{m+1,n}\cup\bigcup_b U_b$ if necessary so that $K$ belongs to the complement of its closure. Then $\boldsymbol Y=\boldsymbol{CZMD}$ on $K$ and therefore
\[
[\boldsymbol Y]_{12} = \frac{\gamma_{m+1,n}^{(0)}}{w_{m+1,n}}\Phi_{m+1,n}^{(1)}S_{m+1,n}^{(1)}\left( 1+\upsilon_{m+1,n}^{(1,1)} + \upsilon_{m+1,n}^{(1,2)}\Upsilon_{1;m+1,n}^{(1)} + \upsilon_{m+1,n}^{(1,3)}\Upsilon_{2;m+1,n}^{(1)} \right).
\]
Even though $\Upsilon_{1;m+1,n}^{(1)}$ has a pole at infinity, the product $\upsilon_{m+1,n}^{(1,2)}\Upsilon_{1;m+1,n}^{(1)}$ is finite satisfies \eqref{final} by \eqref{convFuns} and the maximum modulus principle. As $w_{m+1,n}^{-1}\to w_\sigma^{-1}$ locally uniformly in $\overline\CP\setminus\{a_\sigma,b_{\sigma,c}\}$, the first relation in \eqref{asymptotics2} follows from \eqref{Y}, \eqref{convFuns}, and \eqref{final}. Observe also that the last equality essentially does not change on $\Delta_\sigma$ or $\Delta_{\sigma,c}\setminus U_{b_{\sigma,c}}$, i.e., we simply need to replace the functions by their boundary values. This yields the second formula in \eqref{asymptotics2}. Finally, we get that
\begin{multline*}
[\boldsymbol Y]_{12} = \gamma_{m+1,n}^{(0)}\Phi_{m+1,n}^{(1)\pm}\frac{S_{m+1,n}^{(1)\pm}}{w_{\sigma,c}}\left( 1+\upsilon_{m+1,n}^{(1,1)} + \upsilon_{m+1,n}^{(1,2)}\Upsilon_{1;m+1,n}^{(1)\pm} + \upsilon_{m+1,n}^{(1,3)}\Upsilon_{2;m+1,n}^{(1)\pm} \right) + \\
\gamma_{m+1,n}^{(0)}\Phi_{m+1,n}^{(2)\pm}\frac{S_{m+1,n}^{(2)\pm}}{\rho_\mu}\left( 1+\upsilon_{m+1,n}^{(1,1)} + \upsilon_{m+1,n}^{(1,2)}\Upsilon_{1;m+1,n}^{(2)\pm} + \upsilon_{m+1,n}^{(1,3)}\Upsilon_{2;m+1,n}^{(2)\pm} \right)
\end{multline*}
on $\Delta_\mu$. The third relation in \eqref{asymptotics2} now follow from \eqref{Y}, \eqref{S-jump}, \eqref{convFuns}, and \eqref{final}. This finishes the proof of Theorem~\ref{thm:Frobenius} since the uniformity of the estimates in the case $b_{\sigma,c}\not\in\partial D_{\sigma,c}^-$ follows from the fact $S_{m+1,n}=S_c$ and $\Upsilon_{k;m+1,n}=\Upsilon_k$ for all $n$ large enough and therefore we do not need to use \eqref{convFuns}.

\end{document}